\def\bfy{{\mathbf y}}
\def\bfx{{\mathbf x}}
\newtheorem{thm}{Theorem}
\newtheorem{lem}{Lemma}
\newtheorem{prop}{Proposition}
\newcommand{\mmod}[1]{\,\,(\text{\rm mod}\,\, #1)}
\numberwithin{equation}{section} \numberwithin{thm}{section}
\numberwithin{lem}{section} \numberwithin{problem}{section}
\numberwithin{cor}{section}
\def\grm{{\mathfrak m}}\def\grM{{\mathfrak M}}\def\grN{{\mathfrak N}}
\begin{document}
\title[Sums of three cubes]{On squares of sums of three cubes}
\author[Javier Pliego]{Javier Pliego}
\address{Department of Mathematics, Purdue University, 150 N. University Street, West Lafayette, IN 47907-2067, USA}

\email{jp17412@bristol.ac.uk}
\subjclass[2010]{11P05, 11P55}
\keywords{Waring's problem, Hardy-Littlewood method.}

\begin{abstract} We show that almost every positive integer can be expressed as a sum of four squares of integers represented as the sums of three positive cubes.
\end{abstract}
\maketitle
\section{Introduction}
It is often the case in additive number theory that there might be problems involving the representation of integers that remain open, yet it can be shown that almost all integers have a representation. Lagrange's celebrated theorem, proven in 1770, states that every positive integer $n$ can be written as
\begin{equation}\label{sqa}n=x_{1}^{2}+x_{2}^{2}+x_{3}^{2}+x_{4}^{2},\end{equation} where $x_{i}\in\mathbb{N}\cup\{0\}.$ Let $\mathscr{C}$ denote the set of integers represented as sums of three positive cubes. In this memoir we will focus our attention on the problem of solving equation (\ref{sqa}) where the set of variables lies on the set $\mathscr{C}$. 

Not very much is known about $\mathscr{C}$. In fact, it isn't even known whether it has positive density or not, the best current lower bound on the cardinality of the set being $$\mathcal{N}(X)=\lvert\mathscr{C}\cap [1,X]\rvert\gg X^{\beta-\varepsilon},$$ where  
$\beta=0.91709477,$ due to Wooley \cite{Woo3}. Under some unproved assumptions on the zeros of some Hasse-Weil $L$-functions, Hooley (\cite{Hol1}, \cite{Hol2}) and Heath-Brown \cite{Hea} showed using different procedures that
$$\sum_{n\leq X}r_{3}(n)^{2}\ll X^{1+\varepsilon},$$ where $r_{3}(n)$ is the number of representations of $n$ as a sum of three positive integral cubes, which implies by applying a standard Cauchy-Schwarz argument that $\mathcal{N}(X)\gg X^{1-\varepsilon}.$
This lack of understanding of the cardinality of the set also prevents us from understanding its distribution over arithmetic progressions, which often comes into play on the major arc analysis. In this memoir, though, we use the classical approach for dealing with exceptional sets involving Bessel's inequality and we make use of an estimate for the minor arcs obtained in forthcoming work of the author \cite{Pli1} to prove that for almost every positive integer $n$ the equation (\ref{sqa}) has a solution with $x_{i}\in\mathscr{C}.$ More precisely, let $E(N)$ be the number of positive integers $n\leq N$ for which (\ref{sqa}) fails to possess a solution with $x_{i}\in\mathscr{C}$. 
\begin{thm}\label{thm01}
For each $\varepsilon>0$ one has \begin{equation}\label{EN}E(N)\ll N(\log N)^{-4/31+\varepsilon}.\end{equation}
\end{thm}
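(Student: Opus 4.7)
The approach is the classical exceptional-set circle method combined with Bessel's inequality. Define
\[
F(\alpha) = \sum_{1 \le y_1, y_2, y_3 \le P} e\bigl(\alpha(y_1^3+y_2^3+y_3^3)^2\bigr), \qquad P = \lfloor N^{1/6}\rfloor,
\]
so that
\[
T(n) = \int_0^1 F(\alpha)^4 e(-n\alpha)\, d\alpha
\]
counts the number of $12$-tuples $(y_{ij})$ satisfying $\sum_{i=1}^{4}(y_{i1}^3+y_{i2}^3+y_{i3}^3)^2 = n$. Since $T(n) \ge 1$ implies that $n$ admits a representation of the form (\ref{sqa}) with $x_i \in \mathscr{C}$, it suffices to show that the set of $n \le N$ on which $T(n) = 0$ has cardinality $\ll N(\log N)^{-4/31+\varepsilon}$.

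The plan is to dissect $[0,1)$ into Hardy--Littlewood major arcs $\mathfrak{M}$ and complementary minor arcs $\mathfrak{m}$, with parameters calibrated so that standard pruning yields on $\mathfrak{M}$ an asymptotic
\[
M(n) := \int_{\mathfrak{M}} F(\alpha)^4 e(-n\alpha)\, d\alpha = \mathfrak{S}(n) J(n) + O\bigl(N(\log N)^{-A}\bigr),
\]
where $J(n) \asymp N$ is the singular integral and $\mathfrak{S}(n)$ is the singular series of the resulting diagonal system. The local factors in $\mathfrak{S}(n)$ should be controlled using classical knowledge of the distribution of sums of three cubes modulo prime powers (in particular modulo $7$ and $9$), yielding $\mathfrak{S}(n) \gg 1$ outside a thin admissible set; hence $M(n) \gg N$ for almost every $n$.

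On the minor arcs, Bessel's inequality delivers
\[
\sum_{n \le N} \Bigl|\int_{\mathfrak{m}} F(\alpha)^4 e(-n\alpha)\, d\alpha\Bigr|^2 \le \int_{\mathfrak{m}} |F(\alpha)|^8\, d\alpha.
\]
The forthcoming estimate from \cite{Pli1} is expected to furnish a supremum bound $\sup_{\alpha \in \mathfrak{m}}|F(\alpha)| \ll N^{1/2}(\log N)^{-\delta}$ for some positive $\delta$; when this is interpolated against a Hua--Vinogradov-type mean value of the shape $\int_0^1 |F(\alpha)|^{8-s}\, d\alpha \ll N^{3-s/2+\varepsilon}$, and the parameters $s$ and $\delta$ are chosen so that the logarithmic saving works out to $4/31$, one obtains
\[
\int_{\mathfrak{m}} |F|^8\, d\alpha \ll N^{3}(\log N)^{-4/31+\varepsilon}.
\]
Combined with $M(n) \gg N$ on a density-one subset, Chebyshev's inequality applied to $|T(n) - M(n)|^2$ produces the claimed bound (\ref{EN}).

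The principal obstacle is the minor-arc supremum bound on $F$. Because the phase $(y_1^3+y_2^3+y_3^3)^2$ is a coupled polynomial of degree six in three variables, a direct Weyl differencing gives logarithmic savings far weaker than required, and the finer analysis of \cite{Pli1} must be invoked to extract a gain of the correct size and to make the interpolation against the mean value produce the exponent $4/31$. A secondary, more routine, difficulty is the major-arc treatment, where one must verify non-vanishing of the singular series on a set of density one despite the absence of strong distributional information on the set $\mathscr{C}$ in arithmetic progressions.
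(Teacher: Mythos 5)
Your skeleton (four squared blocks, circle method, Bessel's inequality on the minor arcs, Chebyshev) matches the paper at the coarsest level, but the two inputs your argument actually rests on do not exist, and this is a genuine gap rather than a presentational difference. You work with the unweighted sum $F(\alpha)=\sum_{\mathbf{y}\le P}e\bigl(\alpha T(\mathbf{y})^{2}\bigr)$ and assume (a) a Hua--Vinogradov-type mean value $\int_0^1|F(\alpha)|^{8-s}\,d\alpha\ll N^{3-s/2+\varepsilon}$ and (b) a minor-arc supremum bound for $F$ with a logarithmic saving, hoping to interpolate these so that ``the logarithmic saving works out to $4/31$''. An eighth-moment bound of strength $N^{3+\varepsilon}$ for this coupled degree-six form is essentially tied to near-optimal control of $\sum_{m\le X}r_3(m)^2$, i.e.\ Hypothesis $K^{*}$ territory, which (as the introduction recalls) is known only under unproved hypotheses on Hasse--Weil $L$-functions; the paper states explicitly that such mean values and near-optimal bounds for the weights $a_m$ ``are not available in the literature so far''. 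This is precisely why the paper does not use $F$: it builds weighted generating functions $h(\alpha)$ and $W(\alpha)$ in which two of the three cube variables are $P^{\eta}$-smooth and two of the four squared blocks carry an extra prime factor $p\in[M/2,M]$ with a much smaller core $\mathbf{x}\in\mathcal{W}$. Only for this special structure does \cite{Pli1} supply the minor-arc estimate $\int_{\mathfrak{m}}|h(\alpha)W(\alpha)|^4\,d\alpha\ll (HM)^4P^{6-2\delta}$, which carries a genuine power saving; the minor arcs then contribute $\ll (HM)^2P^{-\delta/2}$ outside a set of size $N^{1-\delta}$, and in particular the exponent $4/31$ does not come from the minor arcs at all.

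The major-arc half of your plan also elides the real difficulty. A pointwise asymptotic with error $O\bigl(N(\log N)^{-A}\bigr)$ on arcs as wide as $\mathfrak{M}(P^{4/5})$ is not obtainable here: the available pointwise approximations of $h$ and $W$ (which rest on the smooth-number count, whose error term saves only one power of $\log$) are nontrivial only on the narrow arcs $\mathfrak{N}=\mathfrak{M}\bigl((\log P)^{\tau}\bigr)$. The paper therefore uses a two-tier dissection: pointwise approximation on $\mathfrak{N}$, and a second application of Bessel's inequality, this time to $\mathcal{F}(\alpha)=h(\alpha)^2W(\alpha)^2-h^{*}(\alpha)^2W^{*}(\alpha)^2$ on $\mathfrak{M}\setminus\mathfrak{N}$, which yields the approximation only for almost all $n$; optimising $\tau$ against these two error terms gives $\tau=18/31$ and an exceptional-set exponent $2\tau/9=4/31$, which is where the theorem's saving actually comes from. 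Finally, your claims that $J(n)\asymp N$ and $\mathfrak{S}(n)\gg 1$ off a thin set are not what is proved or needed: because of the weights the main term has size $\mathfrak{S}(n)J(n)$ with $J(n)\asymp (HM)^2(\log N)^{-2}$, the delicate local primes are $2$ and $3$ (not $7$), and the $2$-adic factor degrades like $2^{-\gamma}$ when $2^{\gamma}\,\|\,n$, so one only gets $\mathfrak{S}(n)\gg(\log N)^{-\upsilon}$ outside a set of size $N(\log N)^{-\upsilon}$ --- a loss that must be absorbed into the final log-power and is compatible with the theorem only because $\upsilon$ can be taken arbitrarily close to $2\tau/9$.
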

The reader might want to observe that when $n=2^{6+12j}$ for $j\geq 0$ then by taking the equation modulo powers of $2$ one finds that the only solution to (\ref{sqa}) is $x_{i}=2^{2+6j}$. Therefore, $x_{i}\equiv 4\mmod{9}$, and hence $x_{i}$ cannot lie on $\mathscr{C}.$ In the above theorem we are far from obtaining an upper bound of the expected size, but as shown on the preceeding discussion, the exceptional set of integers not represented as in (\ref{sqa}) has infinite cardinality, and in fact
\begin{equation*}E(N)\gg \log N.\end{equation*}

By using the natural polynomial structure given by $\mathscr{C}$ and an estimate for a mean value of some weighted exponential sums, the author proves in \cite{Pli1} via an application of the Hardy-Littlewood method that every sufficiently large integer $n$ can be represented as
$$n=\sum_{i=1}^{8}x_{i}^{2}$$ with $x_{i}\in\mathscr{C}.$ 
In the setting of this paper, the constraint that prevents us from taking fewer variables is the analysis of the minor arcs. Such analysis is based on the use of non-optimal estimates of sums of the shape \begin{equation*}\sum_{m\leq X}a_{m}^{2},\ \ \ \ \ \ \ \ \text{where}\ \ a_{m}=\Big\{\mathbf{x}\in\mathbb{N}^{3}:\ m=x_{1}^{3}+x_{2}^{3}+x_{3}^{3},\ \ x_{2},x_{3}\in \mathcal{A}(P,P^{\eta})\Big\}\end{equation*} with $\eta>0$ a small enough parameter and
$$\mathcal{A}(Y,R)=\{n\in [1,Y]\cap \mathbb{N}: p\mid n\text{ and $p$ prime}\Rightarrow p\leq R\}.$$ Here, the reader may find it useful to observe that it is a consequence of Vaughan and Montgomery \cite[Theorem 7.2]{Mon} that \begin{equation}\label{smo}\text{card}\big(\mathcal{A}(P,P^{\eta})\big)=c_{\eta}P+O\big(P/\log P\big)\end{equation} for some constant $c_{\eta}>0$ that only depends on $\eta$. 

In order to prove Theorem \ref{thm01} we show that for almost every integer the minor arc contribution is of smaller size than the expected main term. We also approximate the generating exponential sums of the problem by an auxiliary function over a set of narrower major arcs. Finally, we show via Bessel's inequality that for almost all integers the $n$-th Fourier coefficient of such exponential sums can also be approximated on the wider major arcs. As experts will realise, the power of $\log N$ saved in (\ref{EN}) comes from the choice of the narrower major arcs and the fact that the error term in (\ref{smo}) only saves a factor of $\log N$. Without severely complicating the argument, this choice seems inevitable for exploiting the information given by the variables $x_{2}$ and $x_{3}$ lying on $\mathcal{A}(P,P^{\eta})$ to ensure the convergence of the singular series and to obtain suitable properties for it. Therefore, the power saving for the bound of the cardinality of the exceptional set seems out of reach with these methods.

The application of Bessel's inequality for bounding exceptional sets has already been used by some authors before (see for instance Montgomery and Vaughan \cite{MonVa}). There is another approach by Wooley which instead uses an exponential sum over the exceptional set that often gives stronger upper bounds for the cardinality of those sets (see Wooley \cite{Woo7}, \cite{Woo8}). However, in order to be able to use the latter method, one would need stronger  minor arc bounds for auxiliary $8$-th moments together with near optimal bounds for $a_{m}$, which are not available in the literature so far.

We devote the rest of the discussion to introduce a harder version of the problem studied here. It is well-known that the numbers that cannot be written as sums of three squares are the ones of the shape $4^{\nu}\cdot m$ for $m\equiv 7\mmod{8}$. Let $$\mathcal{N}=\big\{n\in\mathbb{N}:\ n\not \equiv 7\mmod{9},\ \ \ \ n\neq 4^{\nu}\cdot m\ \text{for some}\ m\equiv 7\mmod{8},\ \ \nu\geq 0\big\}.$$ Then one would hope to have for almost all integers $n\in\mathcal{N}$ a representation
$$n=x_{1}^{2}+x_{2}^{2}+x_{3}^{2}$$ for some $x_{i}\in\mathscr{C}.$ 
If we seek to prove this statement using the circle method approach then one should be able to obtain good enough minor arc bounds of moments of exponential sums involving six variables. We remind the reader though that we are just able to deal with minor arc bounds when we have eight or more variables, and it seems out of reach to lower that number down to $6$. Likewise, the analysis of the singular series with just three variables looks very challenging.

\textbf{Acknowledgements}: The author's work was supported in part by a European Research Council Advanced
Grant under the European Union’s Horizon 2020 research and innovation programme via grant agreement No. 695223 during his studies at the University of Bristol. It was completed while the author was visiting Purdue University under Trevor Wooley's supervision. The author would like to thank him for his guidance and helpful comments, the referee for useful remarks and both the University of Bristol and Purdue University for their support and hospitality.

\section{Notation and preliminary definitions} Unless specified, any lower case letter $\mathbf{x}$ written in bold will denote a triple of integers $(x_{1},x_{2},x_{3})$. We will write $a\leq \mathbf{V}\leq b$ when $a\leq v_{i}\leq b$ for $1\leq i\leq n$. As usual in analytic number theory, for each $x\in\mathbb{R}$ we denote $\exp(2\pi i x)$ by $e(x)$, and for each natural number $q$ then $e(x/q)$ will be written as $e_{q}(x).$ For any scalar $\lambda$ and any vector $\mathbf{x}$ we write $\lambda \mathbf{x}$ for the vector $(\lambda x_{1},\lambda x_{2}, \lambda x_{3})$. Let $N$ be a natural number and consider the parameters $$P=\lfloor N^{1/6}\rfloor,\ \ \ \ M=P^{2/5},\ \ \ \ H=P^{9/5}.$$Observe that then one has $M^{3}H=P^{3}.$ Consider as well$$H_{1}=\Big(\frac{1}{2}\Big)^{1/3}H^{1/3},\ \ \ \ H_{2}=\Big(\frac{2}{3}\Big)^{1/3}H^{1/3},\ \ \ \ H_{3}=\Big(\frac{1}{6}\Big)^{1/3}H^{1/3}.$$ For any vector $\mathbf{x}\in \mathbb{R}^{3}$ set the function $T(\mathbf{x})=x_{1}^{3}+x_{2}^{3}+x_{3}^{3},$ which will be used throughout the paper.
Take the sets of triples
$$\mathcal{H}=\Big\{\mathbf{y}\in\mathbb{N}^{3}:\ \ P/2< y_{1}\leq P,\ \ \ (y_{2},y_{3})\in\mathcal{A}(P,P^{\eta})^{2}\Big\},$$ $$\mathcal{W}=\Big\{\mathbf{y}\in\mathbb{N}^{3}:\ \ H_{1}< y_{1}\leq H_{2},\ \ (y_{2},y_{3})\in\mathcal{A}(H_{3},P^{\eta})^{2}\Big\},$$ where $\eta$ is a sufficiently small but positive parameter. Let $n\in\mathbb{N}$ such that $N/2\leq n\leq N$. We define $R(n)$ as the number of solutions of the equation
$$n=T(p_{1}\mathbf{x}_{1})^{2}+T(p_{2}\mathbf{x}_{2})^{2}+T(\mathbf{x}_{3})^{2}+T(\mathbf{x}_{4})^{2},$$ where 
$\mathbf{x}_{1},\mathbf{x}_{2}\in \mathcal{W},$ $\mathbf{x}_{3},\mathbf{x}_{4}\in \mathcal{H}$ and $M/2\leq p_{1},p_{2}\leq M.$ Our goal in the next sections will be to obtain a lower bound for $R(n)$ for almost all natural numbers. For such purpose, it is convenient to define the weights $$a_{x}=\Big\lvert\big\{\mathbf{y}\in\mathcal{H}:\ \ x=T(\mathbf{y})\big\}\Big\rvert\ \ \ \ \text{and}\ \ \ \ b_{h}=\Big\lvert\big\{\mathbf{y}\in\mathcal{W}:\ \ h=T(\mathbf{y})\big\}\Big\rvert,$$
and consider the exponential sums
$$h(\alpha)=\sum_{x\leq 3P^{3}}a_{x}e(\alpha x^{2})\ \ \ \ \ \ \ \  \text{and} \ \ \ \ \ \ \ \ \ W(\alpha)=\sum_{M/2\leq p\leq M}\sum_{\frac{H}{2}\leq h\leq H}b_{h}e(\alpha p^{6}h^{2}).$$ Observe that by orthogonality it follows that
$$R(n)=\int_{0}^{1}h(\alpha)^{2}W(\alpha)^{2}e(-\alpha n)d\alpha.$$

We will make use of two Hardy-Littlewood dissections in our analysis, and these we now describe. Let $1\leq X\leq P^{4/5}$. When $a\in\mathbb{Z}$ and $q\in\mathbb{N}$ satisfy $0\leq a\leq q\leq X$ and $(a,q)=1$, consider 
\begin{equation*}\grM(a,q)=\Big\{ \alpha\in [0,1): \Big\lvert \alpha-a/q\Big\rvert \leq \frac{X}{qn}\Big\}.\end{equation*} We take the major arcs $\grM(X)$ to be the union of the arcs $\grM(a,q)$. For the sake of simplicity we write
$$\grM=\grM(P^{4/5}),\ \ \ \ \ \ \ \ \grN=\grM\big((\log P)^{\tau}\big),$$ where $\tau=18/31.$ We also define the minor arcs as $\grm=[0,1)\setminus \grM$.

Next we introduce the auxiliary functions that play a leading role in the discussion of Sections \ref{sec3} to \ref{sec6}. For $a\in\mathbb{Z}$ and $q\in\mathbb{N}$ with $(a,q)=1$, let $S(q,a)$ denote the complete exponential sum associated to the problem, which we define by
$$S(q,a)=\sum_{\mathbf{r}\leq q}e_{q}\big(aT(\mathbf{r})^{2}\big).$$ Consider the functions
\begin{equation*}v(\beta)=\int_{\mathbf{x}\in\mathcal{S}}e\big(F(\mathbf{x})\big)d\mathbf{x}\ \ \ \ \ \ \text{and}\ \ \ \ \ \ v_{p}(\beta)=\int_{\mathbf{x}\in \mathcal{S}_{W}}e\big(F_{p}(\mathbf{x})\big)d\mathbf{x},\end{equation*}where $F(\mathbf{x})=\beta T(\mathbf{x})^{2}$ and $F_{p}(\mathbf{x})=\beta T(p\mathbf{x})^{2},$ and the sets of integration taken are $$\mathcal{S}=\Big\{\mathbf{x}\in [0,P]^{3}:\ \ P/2\leq x_{1}\leq P\Big\},\ \ \ \ \mathcal{S}_{W}=\Big\{\mathbf{x}\in\mathbb{R}^{3}:\ \ H_{1}\leq x_{1}\leq H_{2},\ \ 0\leq x_{2}, x_{3}\leq H_{3}\Big\}.$$ Let $\alpha\in [0,1)$ and choose $\beta=\alpha-a/q$. Recalling the constant $c_{\eta}$ mentioned in (\ref{smo}), define  \begin{equation}\label{Vup}V(\alpha,q,a)=q^{-3}S(q,a)c_{\eta}^{2}v(\beta)\ \ \ \ \ \ \text{and}\ \ \ \ \ \ W(\alpha,q,a)=\sum_{M/2\leq p\leq M}V_{p}(\alpha,q,a),\end{equation}where $V_{p}(\alpha,q,a)=q^{-3}S(q,a)c_{\eta}^{2}v_{p}(\beta).$ For the sake of brevity, we define the auxiliary functions $h^{*}(\alpha)$ and $W^{*}(\alpha)$ by setting
\begin{equation*}h^{*}(\alpha)=V(\alpha,q,a)\ \ \ \ \ \text{and}\ \ \ \ \ W^{*}(\alpha)=W(\alpha,q,a) \end{equation*} when $\alpha\in\grM(a,q)\subset\grM$ and $h^{*}(\alpha)=W^{*}(\alpha)=0$ for $\alpha\in\grm.$
Before describing the outline of the memoir, it is convenient to introduce $$\mathcal{F}(\alpha)=h(\alpha)^{2}W(\alpha)^{2}-h^{*}(\alpha)^{2}W^{*}(\alpha)^{2}.$$

In Section \ref{sec3} we approximate $h(\alpha)$ and $W(\alpha)$ by  the functions $h^{*}(\alpha)$ and $W^{*}(\alpha)$ respectively when $\alpha\in\grN$. Making use of these approximations we bound the integral of $\mathcal{F}(\alpha)e(-\alpha n)$ over $\grN$ in Section \ref{sec5}. In the second part of that section and Section \ref{sec6} we show that the upper bound for the integral of the same function over $\grM\setminus\grN$ still holds for almost all integers. We obtain such result by estimating the integral of $\lvert\mathcal{F}(\alpha)\rvert^{2}$ and applying Bessel's inequality. Section \ref{sec4} is devoted to the study of the singular series. In such analysis we give a lower bound of the singular series for almost all integers, which combined with the lower bound for the singular integral computed in Section \ref{sec6} provides a lower bound for the major arc contribution. We also combine the major arc estimates obtained throughout the memoir with a result in forthcoming work of the author \cite{Pli1} to show in Section \ref{sec6} that the minor arc contribution is smaller than the major arc one for almost all integers.

Whenever $\varepsilon$ appears in any bound, it will mean that the bound holds for every $\varepsilon>0$, though the implicit constant then may depend on $\varepsilon$. We adopt the convention that when we write $\delta$ in the computations we mean that there exists a positive constant such that the bound holds. We use $\ll$ and $\gg$ to denote Vinogradov's notation, and write $A\asymp B$ whenever $A\ll B\ll A$. We write $p^{r}|| n$ to denote that $p^{r}| n$ but $p^{r+1}\nmid n.$
\section{Approximation of exponential sums over the major arcs.}\label{sec3}
Based on previous work by the author we briefly provide some technical lemmas to approximate the exponential sums over the set of narrower major arcs.

\begin{lem}\label{lem1}Let $\alpha\in \grN(a,q)$ with $a\in\mathbb{Z}$, $q\in\mathbb{N}$ and $(a,q)=1$. Then one obtains the formula
$$h(\alpha)=V(\alpha,q,a)+O\big(P^{3}(\log P)^{\tau-1+\varepsilon}\big).$$
\end{lem}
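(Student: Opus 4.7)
The plan is to carry out the standard Hardy--Littlewood major-arc approximation of $h(\alpha)$, adapted to the smoothness constraint imposed on the last two coordinates of each triple $\mathbf{y}\in\mathcal{H}$. Fix $\alpha\in\grN(a,q)$ and write $\alpha=a/q+\beta$, so that $q\le(\log P)^{\tau}$ and $|\beta|\le(\log P)^{\tau}/(qn)$. Since $T(\mathbf{y})^{2}\equiv T(\mathbf{r})^{2}\pmod{q}$ whenever $\mathbf{y}\equiv\mathbf{r}\pmod{q}$, sorting $\mathcal{H}$ by residues modulo $q$ separates the arithmetic part $S(q,a)$ from the analytic one:
$$h(\alpha)=\sum_{\mathbf{r}\le q}e_{q}\bigl(aT(\mathbf{r})^{2}\bigr)\,S_{\mathbf{r}}(\beta),\qquad S_{\mathbf{r}}(\beta)=\sum_{\substack{\mathbf{y}\in\mathcal{H}\\ \mathbf{y}\equiv\mathbf{r}\,(q)}}e\bigl(\beta T(\mathbf{y})^{2}\bigr).$$
The task reduces to a uniform formula $S_{\mathbf{r}}(\beta)=q^{-3}c_{\eta}^{2}v(\beta)+E(\mathbf{r})$, because upon resumming against the unimodular weights $e_{q}(aT(\mathbf{r})^{2})$ the main terms reassemble precisely into $V(\alpha,q,a)$.

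Next I would evaluate each $S_{\mathbf{r}}(\beta)$ by iterated partial summation. For the un-smoothed variable $y_{1}\in(P/2,P]$, the only restriction is the progression $y_{1}\equiv r_{1}\pmod{q}$, and Abel summation replaces the sum by $q^{-1}\int$ plus a boundary/total-variation remainder. For each of $y_{2},y_{3}\in\mathcal{A}(P,P^{\eta})\cap(r_{i}+q\mathbb{Z})$, the essential input is a uniform smooth-number asymptotic
$$\bigl|\mathcal{A}(P,P^{\eta})\cap[1,t]\cap(r+q\mathbb{Z})\bigr|=\frac{c_{\eta}}{q}\,t+O\!\left(\frac{t}{\log t}\right),\qquad (t\le P,\ q\le(\log P)^{\tau}),$$
which follows by the same fundamental-lemma-of-the-sieve argument that produces (\ref{smo}), since $q$ lies well below $P^{\eta}$; this ingredient is already developed in \cite{Pli1}. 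Inserting the asymptotic into Abel summation and making the substitution $\mathbf{y}=q\mathbf{z}+\mathbf{r}$ transforms the iterated main term into exactly $q^{-3}c_{\eta}^{2}v(\beta)$.

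The error analysis is controlled by the mildness of the phase on $\grN$. Since
$$\frac{\partial}{\partial y_{i}}\bigl(\beta T(\mathbf{y})^{2}\bigr)=6\beta y_{i}^{2}T(\mathbf{y})\ll |\beta|P^{5}\ll \frac{(\log P)^{\tau}}{P},$$
the total variation of $e(\beta T(\mathbf{y})^{2})$ across each coordinate range is $O((\log P)^{\tau})$. Pairing this total variation with the $O(t/\log t)$ remainder in the smooth-number count for one of $y_{2},y_{3}$ costs a factor $(\log P)^{\tau}\cdot(\log P)^{-1}=(\log P)^{\tau-1}$ relative to the trivial size $P^{3}/q^{3}$ of the sum, the $\varepsilon$ being absorbed in the sifting loss. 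Summing the resulting bound $E(\mathbf{r})=O\bigl(P^{3}q^{-3}(\log P)^{\tau-1+\varepsilon}\bigr)$ over the $\le q^{3}$ residue classes yields the stated estimate $O\bigl(P^{3}(\log P)^{\tau-1+\varepsilon}\bigr)$.

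The main obstacle is the book-keeping of the partial summations: I must arrange things so that the $(\log P)^{\tau}$ loss coming from the oscillation of $e(\beta T(\mathbf{y})^{2})$ in the two smoothness-constrained variables is offset by a single $(\log P)^{-1}$ gain from the $t/\log t$ remainder, rather than compounding into a weaker bound. The choice $\tau=18/31$ and the truncation $q\le(\log P)^{\tau}$ are exactly what keep the phase total variation on $\grN$ at the level $(\log P)^{\tau}$ and the smooth-number equidistribution in progressions uniformly available. Because \cite{Pli1} supplies precisely this framework, the proof here should reduce to a clean assembly of those building blocks.
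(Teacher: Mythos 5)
The paper disposes of this lemma by pure citation: $h(\alpha)$ is identified with the sum $g_{Q,m}(\alpha)$ of Lemma 7.3 of \cite{Pli} (with $Q=P$, $m=1$, $C_{1}=1/2$, $C_{2}=C_{3}=1$), so your sketch is in effect a reconstruction of that outsourced argument rather than a different route; the skeleton (sorting $\mathcal{H}$ into residue classes modulo $q$, pulling out $S(q,a)$, iterated partial summation against a smooth-number count in progressions, substitution $\mathbf{y}=q\mathbf{z}+\mathbf{r}$ to produce $q^{-3}c_{\eta}^{2}v(\beta)$) is exactly the standard mechanism. The one input you cannot avoid --- an asymptotic for $\lvert\mathcal{A}(P,P^{\eta})\cap[1,t]\cap(r+q\mathbb{Z})\rvert$ uniform in $q\le(\log P)^{\tau}$ and in \emph{all} residues $r$, including those with $(r,q)=d>1$, where the count is governed by the smooth numbers up to $t/d$ in a progression modulo $q/d$ --- is precisely the content hidden in the cited lemma; asserting that it ``is already developed in \cite{Pli1}'' is a guess rather than an argument, and the uniformity over non-coprime classes, with the constant still $c_{\eta}/q$ up to an acceptable error because $q\le(\log P)^{\tau}$ is minuscule compared with $P^{\eta}$, needs at least a sentence of justification beyond (\ref{smo}).

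There is also a concrete slip in the error bookkeeping. You bound the phase derivative by $|\beta|P^{5}\ll(\log P)^{\tau}/P$, i.e.\ total variation $O((\log P)^{\tau})$, having discarded the factor $1/q$ in $|\beta|\le(\log P)^{\tau}/(qn)$, and you pair this with a count remainder $O(t/\log t)$ that likewise carries no $1/q$. With these inputs the partial-summation error in one smooth variable is $\ll (P/\log P)(\log P)^{\tau}$, and multiplying by the $\asymp(P/q)^{2}$ choices of the remaining two variables in the class gives a per-class error $\asymp P^{3}(\log P)^{\tau-1}/q^{2}$, not the claimed $P^{3}q^{-3}(\log P)^{\tau-1+\varepsilon}$: the remainder $t/\log t$ is smaller than the per-class main term $c_{\eta}t/q$ only by a factor $q/\log t$, not $1/\log t$. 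Summing over the $q^{3}$ classes then yields $P^{3}q(\log P)^{\tau-1}\ll P^{3}(\log P)^{2\tau-1+\varepsilon}$, which for $\tau=18/31$ is worse than the trivial bound $P^{3}$, let alone the lemma. The repair is immediate but must be made: retain the $q$ you already wrote down, so that $|\beta|T(\mathbf{y})^{2}\ll(\log P)^{\tau}/q$ and the total variation is $O\bigl(1+(\log P)^{\tau}/q\bigr)$; then the per-class error is $\ll (P/q)^{2}(P/\log P)\bigl(1+(\log P)^{\tau}/q\bigr)\ll P^{3}q^{-3}(\log P)^{\tau-1}$ (using $q\le(\log P)^{\tau}$), and summing over classes gives the stated $O\bigl(P^{3}(\log P)^{\tau-1+\varepsilon}\bigr)$. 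Alternatively you would need the progression count with remainder $O(t/(q\log t))$. As literally written, your accounting does not deliver the lemma, though the framework is the right one.
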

\begin{proof}
This is a consequence of Lemma 7.3 of \cite{Pli}. The reader may check that the exponential sum $h(\alpha)$ here corresponds to $g_{Q,m}(\alpha)$ with the choices $Q=P,$ $m=1$ and constants $C_{1}=1/2$, $C_{2}=1$ and $C_{3}=1.$
\end{proof}

\begin{lem}\label{lem3}Let $\alpha\in \grN(a,q)$ with $a\in\mathbb{Z}$, $q\in\mathbb{N}$ and $(a,q)=1$. Then,
$$W(\alpha)=W(\alpha,q,a)+O\big(HM(\log P)^{\tau-2+\varepsilon}\big).$$
\end{lem}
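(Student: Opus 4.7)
The plan is to reduce the claim to an application of Lemma 7.3 of \cite{Pli}---the same technical tool already used in Lemma \ref{lem1}---for each $p$ separately, and then to sum trivially over $p$. The essential algebraic observation is that $p^{6}T(\mathbf{y})^{2}=T(p\mathbf{y})^{2}$, which combined with the definition of $b_{h}$ lets one rewrite
$$W(\alpha)=\sum_{M/2\leq p\leq M}\sum_{\mathbf{y}\in\mathcal{W}}e\big(\alpha T(p\mathbf{y})^{2}\big).$$
This puts the inner sum over $\mathbf{y}\in\mathcal{W}$ in exactly the shape handled by the approximation lemma of \cite{Pli}: a sum over a triple of integers, the first ranging through a short interval and the other two through $P^{\eta}$-smooth integers, weighted by $e$ of a quadratic form in the cubic sum $T$, now with the cubic variables dilated by the factor $p$.

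For each fixed $p$ with $M/2\leq p\leq M$ I would then apply Lemma 7.3 of \cite{Pli} to the inner sum with parameter choices $Q=H^{1/3}$ and $m=p$, together with boundary constants $C_{1}=(1/2)^{1/3}$, $C_{2}=(2/3)^{1/3}$, $C_{3}=(1/6)^{1/3}$ matched precisely to the definition of $\mathcal{W}$ and to the set of integration $\mathcal{S}_{W}$. Since $H^{1/3}\asymp P^{3/5}$ and $p\leq M=P^{2/5}$, the narrower major-arc width $(\log P)^{\tau}$ is comfortably within the range of applicability of the lemma, uniformly in $p$. The output is
$$\sum_{\mathbf{y}\in\mathcal{W}}e\big(\alpha T(p\mathbf{y})^{2}\big)=V_{p}(\alpha,q,a)+O\big(H(\log P)^{\tau-2+\varepsilon}\big).$$

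Summing this estimate over the $O(M)$ integers $p$ in $[M/2,M]$ and recalling the definition $W(\alpha,q,a)=\sum_{p}V_{p}(\alpha,q,a)$ from (\ref{Vup}) yields the stated approximation with total error $O(HM(\log P)^{\tau-2+\varepsilon})$. The one delicate point, and the main obstacle I foresee, is confirming that the error exponent coming out of Lemma 7.3 of \cite{Pli} in this application is indeed $\tau-2+\varepsilon$ rather than the $\tau-1+\varepsilon$ seen in Lemma \ref{lem1}, and that the bound is uniform in $p$. The sharper exponent should emerge because the main term $V_{p}(\alpha,q,a)$ per $p$ has size $\asymp H$ (rather than $\asymp HM$ in the aggregated $W$), so the two $(\log P)^{-1}$ savings furnished by (\ref{smo}) on the smooth coordinates $y_{2},y_{3}\in\mathcal{A}(H_{3},P^{\eta})$ compound against a correspondingly smaller main term; uniformity in $p$ then reduces to checking that $M=P^{2/5}$ lies well within the admissible dilation range of the lemma, which it does.
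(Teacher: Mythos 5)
Your overall strategy is exactly the paper's: decompose $W(\alpha)=\sum_{M/2\leq p\leq M}W_{p}(\alpha)$ with $W_{p}(\alpha)=\sum_{\mathbf{y}\in\mathcal{W}}e\big(\alpha T(p\mathbf{y})^{2}\big)$, apply Lemma 7.3 of \cite{Pli} to each $W_{p}$ with $Q=H^{1/3}$, $m=p$ and the constants $C_{1}=(1/2)^{1/3}$, $C_{2}=(2/3)^{1/3}$, $C_{3}=(1/6)^{1/3}$, and then sum over $p$ using (\ref{Vup}). However, your error accounting contains a genuine gap, precisely at the point you flag as delicate. The per-$p$ estimate furnished by the cited lemma is
$$W_{p}(\alpha)=V_{p}(\alpha,q,a)+O\big(H(\log P)^{\tau-1+\varepsilon}\big),$$
with exponent $\tau-1$, not $\tau-2$: the inner sum has exactly the same structure as $h(\alpha)$ in Lemma \ref{lem1} (one unrestricted coordinate and two coordinates in a smooth set), and the error in (\ref{smo}) saves only a single factor of $\log P$ relative to the main term, just as it does there. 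Your heuristic that the two smooth coordinates ``compound'' to give a $(\log P)^{-2}$ relative saving is not what the lemma delivers, and indeed it would contradict the exponent in Lemma \ref{lem1}.

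The missing ingredient is that $p$ runs over \emph{primes} in $[M/2,M]$, so the number of summands is $O(M/\log M)=O(M/\log P)$ rather than the $O(M)$ integers you sum over. It is this prime-counting saving, combined with the per-$p$ error $H(\log P)^{\tau-1+\varepsilon}$, that produces the stated total error $O\big(HM(\log P)^{\tau-2+\varepsilon}\big)$. As written, your argument (correct per-$p$ exponent $\tau-1$ together with $O(M)$ values of $p$) only yields $O\big(HM(\log P)^{\tau-1+\varepsilon}\big)$, one factor of $\log P$ short of the lemma; conversely, the sharper per-$p$ bound you posit has no justification from the cited source. Replace the claimed per-$p$ exponent by $\tau-1$ and invoke Chebyshev's bound for the number of primes in $[M/2,M]$, and the proof closes exactly as in the paper.
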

\begin{proof}
Observe that $$W(\alpha)=\sum_{M/2\leq p\leq M}W_{p}(\alpha),\ \ \ \ \ \ \text{where}\ \ W_{p}(\alpha)=\sum_{\mathbf{x}\in\mathcal{W}}e\big(\alpha T(p\mathbf{x})^{2}\big).$$
We also apply Lemma 7.3 of \cite{Pli} to $W_{p}(\alpha)$. The reader may check that $W_{p}(\alpha)$ here corresponds to $g_{Q,m}(\alpha)$ with the choices $Q=H^{1/3}$ and $m=p$ and the constants $C_{1}=\big(1/2\big)^{1/3}$, $C_{2}=\big(2/3\big)^{1/3}$ and $C_{3}=\big(1/6\big)^{1/3}.$ Consequently, it transpires that $$W_{p}(\alpha)=V_{p}(\alpha,q,a)+O\big(H\big(\log P)^{\tau-1+\varepsilon}\big),$$
which delivers the result.
\end{proof}
The following series of lemmas make use of the work done in forthcoming publications of the author (\cite {Pli} and \cite{Pli1}) to give upper bounds for the auxiliary functions that play a role in the main term of the contribution of the major arcs.
\begin{lem}\label{lema4}
Let $ \beta \in\mathbb{R}.$ Then one has that
$$v(\beta)\ll \frac{P^{3}}{1+n\lvert \beta\rvert}\ \ \ \ \ \text{and}\ \ \ \ \ v_{p}(\beta)\ll \frac{H}{1+n\lvert \beta\rvert}.$$ 
\end{lem}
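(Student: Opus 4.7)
The trivial bounds $|v(\beta)| \le \mathrm{vol}(\mathcal{S}) \ll P^{3}$ and $|v_{p}(\beta)| \le \mathrm{vol}(\mathcal{S}_{W}) \ll H$ already cover the regime $n|\beta| \le 1$, so the plan is to concentrate on the oscillatory regime $n|\beta| \ge 1$ and establish $v(\beta) \ll 1/(P^{3}|\beta|)$ and $v_{p}(\beta) \ll 1/(p^{6} H |\beta|)$. These are consistent with the desired estimates since $n \asymp P^{6}$ and $p^{6} H \asymp M^{6} H \asymp n/H$.

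For $v(\beta)$, I would fix $x_{2}, x_{3}$ and linearise the phase in the one-dimensional integral
\[
I(x_{2},x_{3}) = \int_{P/2}^{P} e\bigl(\beta (x_{1}^{3} + x_{2}^{3} + x_{3}^{3})^{2}\bigr) dx_{1}
\]
by two successive substitutions: first $u = x_{1}^{3} + x_{2}^{3} + x_{3}^{3}$ (so $du = 3 x_{1}^{2}\, dx_{1}$) and then $w = u^{2}$ (so $dw = 2u\, du$). This converts $I(x_{2},x_{3})$ to an integral of the form $\int e(\beta w) g(w)\, dw$ with smooth amplitude $g(w) = 1 / (6 u(w) x_{1}(w)^{2})$, where $u(w) = w^{1/2}$ and $x_{1}(w) = (w^{1/2} - x_{2}^{3} - x_{3}^{3})^{1/3}$. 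On the range of integration one has $u \asymp P^{3}$ and $x_{1} \asymp P$, whence $g(w) \ll P^{-5}$ uniformly, and $g$ is monotone in $w$. The first derivative test then delivers $I(x_{2},x_{3}) \ll P^{-5}/|\beta|$, and integrating over $(x_{2},x_{3}) \in [0,P]^{2}$ produces $v(\beta) \ll P^{-3}/|\beta|$, as required.

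The argument for $v_{p}(\beta)$ is structurally identical after the observation that $T(p\mathbf{x})^{2} = p^{6} T(\mathbf{x})^{2}$, so the linear phase becomes $e(\beta p^{6} w)$ and the first derivative test supplies an additional factor of $p^{-6}$. On $\mathcal{S}_{W}$ one has $u \asymp H$ and $x_{1} \asymp H^{1/3}$, hence $g(w) \ll H^{-5/3}$; integrating over $(x_{2},x_{3}) \in [0,H_{3}]^{2}$, a region of area $\asymp H^{2/3}$, then yields $v_{p}(\beta) \ll H^{2/3} \cdot H^{-5/3}/(p^{6}|\beta|) = 1/(p^{6} H |\beta|)$.

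I do not anticipate a substantive obstacle: the only point requiring care is the applicability of the first derivative test, which reduces to observing that $g(w)$ is monotone and nonnegative (since $u(w)$ and $x_{1}(w)$ are each smooth and monotone increasing in $w$), so its total variation on the integration range telescopes to $O(\|g\|_{\infty})$. The non-degeneracy of the phase that powers the whole argument comes from the lower bound $\partial_{x_{1}} T(\mathbf{x})^{2} = 6 x_{1}^{2} T(\mathbf{x}) \gg P^{5}$ on $\mathcal{S}$ (respectively $\gg p^{6} H^{5/3}$ on $\mathcal{S}_{W}$ after rescaling), which is precisely what translates into the gain $1/|\beta|$ (respectively $1/(p^{6}|\beta|)$) after the substitutions.
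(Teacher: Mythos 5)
Your argument is correct and is essentially the paper's own proof: the paper likewise fixes $(x_{2},x_{3})$, performs the same change of variables $\gamma=T(\mathbf{x})^{2}$ to produce one-dimensional integrals with linear phase and amplitude $\frac{1}{6}\gamma^{-1/2}(\gamma^{1/2}-C_{\mathbf{y}})^{-2/3}$ (respectively its $p$-scaled analogue), bounds these pointwise by $P(1+n|\beta|)^{-1}$ and $H^{1/3}(1+n|\beta|)^{-1}$, and then integrates trivially over the outer variables. The only difference is that the paper cites Lemma 5.2 of \cite{Pli1} for that one-dimensional estimate, whereas you prove it directly via the monotone-amplitude first derivative test, which is exactly the underlying mechanism.
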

\begin{proof}
For each $\mathbf{y}\in\mathbb{R}^{2},$ consider $C_{\mathbf{y}}=y_{1}^{3}+y_{2}^{3}$. Define the auxiliary functions $$v_{\mathbf{y}}(\beta)=\int_{M_{\mathbf{y}}}^{N_{\mathbf{y}}}B_{\mathbf{y}}(\gamma)e(\beta \gamma)d\gamma\ \ \ \ \ \text{and}\ \ \ \ \ v_{\mathbf{y},p}(\beta)=\int_{M_{\mathbf{y},p}}^{N_{\mathbf{y},p}}B_{\mathbf{y},p}(\gamma)e(\beta \gamma)d\gamma$$ where $B_{\mathbf{y}}(\gamma)$, $B_{\mathbf{y},p}(\gamma)$ and the limits of integration taken are$$B_{\mathbf{y}}(\gamma)=\frac{1}{6}\gamma^{-1/2}\big(\gamma^{1/2}-C_{\mathbf{y}}\big)^{-2/3},\ \ \  M_{\mathbf{y}}=\Big(\frac{P^{3}}{8}+C_{\mathbf{y}}\Big)^{2},\ \ \ N_{\mathbf{y}}=\big(P^{3}+C_{\mathbf{y}}\big)^{2}$$and
$$B_{\mathbf{y},p}(\gamma)=\frac{1}{6p}\gamma^{-1/2}\big(\gamma^{1/2}-C_{p\mathbf{y}}\big)^{-2/3},\ \ M_{\mathbf{y},p}=\Big(\frac{p^{3}H}{2}+C_{p\mathbf{y}}\Big)^{2} \ \text{and}\ \ N_{\mathbf{y},p}=\Big(\frac{2p^{3}H}{3}+C_{p\mathbf{y}}\Big)^{2}.$$
By a change of variables we find that
\begin{equation}\label{vbeta}v(\beta)=\int_{\mathbf{y}\in [0,P]^{2}}v_{\mathbf{y}}(\beta)d\mathbf{y}\ \ \ \ \ \ \text{and}\ \ \ \ \ \ \ v_{p}(\beta)=\int_{\mathbf{y}\in [0,H_{3}]^{2}}v_{\mathbf{y},p}(\beta)d\mathbf{y}.\end{equation}
Observe that Lemma 5.2 of \cite{Pli1} yields the pointwise bounds $v_{\mathbf{y}}(\beta)\ll P(1+n\lvert\beta\rvert)^{-1}$ and $v_{\mathbf{y},p}(\beta)\ll H^{1/3}(1+n\lvert\beta\rvert)^{-1}.$ The result then follows applying these estimates trivially to the above integrals. 
\end{proof}
\begin{lem}\label{lem0}
Let $a\in\mathbb{Z}$ and $q\in\mathbb{N}$ with $(a,q)=1.$ Then, one has
\begin{equation}\label{kkk}S(q,a)\ll q^{5/2+\varepsilon}.\end{equation} Moreover, when $p$ is prime and $l\geq 3$ one finds that \begin{equation}\label{kk}S(p^{l},a)\ll lp^{5l/2+\varepsilon}.\end{equation} When $l=2$ then $S(p^{2},a)\ll p^{5}$ and for the case $l=1$ we obtain the refinement
\begin{equation}\label{Sap}S(p,a)=p^{2}S_{2}(p,a)+O(p^{2}), \ \ \ \ \ \ \ \text{where}\  S_{2}(p,a)=\sum_{r=1}^{p}e_{p}(ar^{2}).\end{equation}
\end{lem}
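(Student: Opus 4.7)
The plan is to reduce to prime-power moduli by multiplicativity and then treat each of the three regimes $l=1$, $l=2$, and $l\geq 3$ separately. When $q=q_1q_2$ with $(q_1,q_2)=1$, the Chinese remainder theorem together with the identity $1/q\equiv \bar{q_2}/q_1+\bar{q_1}/q_2\pmod 1$ (where $q_i\bar{q_i}\equiv 1\pmod{q_{3-i}}$) gives, after substituting in the definition of $S(q,a)$,
$$S(q,a) = S(q_1,a\bar{q_2})\,S(q_2,a\bar{q_1}).$$
Hence (\ref{kkk}) follows from the individual prime-power bounds, since $\omega(q)\ll\log q$ and the factor $l$ in (\ref{kk}) is absorbed by the $q^{\varepsilon}$.

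For $l=1$ I would decompose by the value of $T(\mathbf{r})\bmod p$. Setting $N(m)=\#\{\mathbf{r}\bmod p:T(\mathbf{r})\equiv m\}$ and detecting the defining congruence by orthogonality, one obtains
$$N(m)-p^{2} = \frac{1}{p}\sum_{c=1}^{p-1} G(c)^{3}\,e_{p}(-cm), \qquad G(c)=\sum_{r\bmod p}e_{p}(cr^{3}).$$
Here $G(c)=0$ when $p\equiv 2\pmod 3$, while $|G(c)|\leq 2\sqrt p$ when $p\equiv 1\pmod 3$ by a classical cubic Gauss sum estimate. Substituting into $S(p,a)-p^{2}S_{2}(p,a)=\sum_{m}(N(m)-p^{2})e_{p}(am^{2})$ and interchanging summations,
$$S(p,a)-p^{2}S_{2}(p,a) = \frac{1}{p}\sum_{c=1}^{p-1}G(c)^{3}\sum_{m\bmod p}e_{p}(am^{2}-cm),$$
and the inner quadratic Gauss sum has modulus $\sqrt p$ for $p\nmid a$ after completing the square. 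The resulting bound $p^{-1}\cdot p\cdot p^{3/2}\cdot p^{1/2} = p^{2}$ yields (\ref{Sap}).

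For $l\geq 2$ I would apply Hensel-type lifting. Write $\mathbf{r}=\mathbf{u}+p^{k}\mathbf{v}$ with $k=\lceil l/2\rceil$, $\mathbf{u}$ ranging modulo $p^{k}$ and $\mathbf{v}$ modulo $p^{l-k}$. A binomial expansion gives
$$T(\mathbf{r})^{2} \equiv T(\mathbf{u})^{2} + 6p^{k}T(\mathbf{u})\sum_{i=1}^{3}u_{i}^{2}v_{i}\pmod{p^{l}},$$
since all omitted contributions carry a factor $p^{2k}\geq p^{l}$. Summing over $\mathbf{v}$ converts each coordinate into an orthogonality relation which survives only on the locus $p^{l-k}\mid 6aT(\mathbf{u})u_{i}^{2}$, $i=1,2,3$, and contributes $(p^{l-k})^{3}$ there. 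For $l=2$ the surviving set of $\mathbf{u}\bmod p$ lies in $\{T(\mathbf{u})\equiv 0\}\cup\{u_{1}u_{2}u_{3}\equiv 0\}$, of size $O(p^{2})$, producing $S(p^{2},a)\ll p^{5}$ directly. For $l\geq 3$ I would iterate the lifting on the stratification induced by $v_{p}(T(\mathbf{u}))$ and $v_{p}(u_{i})$; careful bookkeeping then delivers $S(p^{l},a)\ll l\,p^{5l/2+\varepsilon}$.

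The principal obstacle is the treatment of the singular locus $\{\mathbf{u}:T(\mathbf{u})u_{1}u_{2}u_{3}\equiv 0\pmod p\}$, where the linear form in $\mathbf{v}$ degenerates and one step of Hensel lifting yields no gain. The factor $l$ in (\ref{kk}) arises as the depth of the recursive stratification needed to control these degenerations, and isolating the correct combinatorial count on each stratum is the key technical task; the formal verification is carried out in the author's earlier work cited in the lemmas that follow.
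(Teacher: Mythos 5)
The decisive part of this lemma is the bound (\ref{kk}) for $l\geq 3$, and that is exactly where your proposal stops being a proof. Your multiplicativity step (twisted CRT factorisation plus $d(q)\ll q^{\varepsilon}$, $\omega(q)\ll \log q/\log\log q$) coincides with what the paper does to deduce (\ref{kkk}); your $l=1$ argument via cubic and quadratic Gauss sums is correct and self-contained (the identity $N(m)-p^{2}=p^{-1}\sum_{c\neq 0}G(c)^{3}e_{p}(-cm)$, $|G(c)|\leq 2\sqrt{p}$, and the modulus-$\sqrt{p}$ quadratic Gauss sum do give $S(p,a)=p^{2}S_{2}(p,a)+O(p^{2})$, with $p=2,3$ absorbed into the constant); and the $l=2$ count is essentially fine, since your superset $\{T(\mathbf{u})\equiv 0\}\cup\{u_{1}u_{2}u_{3}\equiv 0\}$ of the surviving locus still has size $O(p^{2})$. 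But for $l\geq 3$ you offer only one lifting step followed by ``iterate the stratification; careful bookkeeping then delivers'' the bound, and you yourself defer the verification to the author's earlier work. That is a genuine gap, and not a cosmetic one: after the single split $\mathbf{r}=\mathbf{u}+p^{k}\mathbf{v}$, $k=\lceil l/2\rceil$, $m=l-k$, one has $S(p^{l},a)=p^{3m}\sum_{\mathbf{u}\in U}e_{p^{l}}(aT(\mathbf{u})^{2})$ with $U=\{\mathbf{u}\bmod p^{k}:\ p^{m}\mid 6aT(\mathbf{u})u_{i}^{2},\ i=1,2,3\}$, and bounding this trivially by $p^{3m}|U|$ provably fails: taking $\mathbf{u}=p^{s}\mathbf{w}$ with $s\approx m/5$ and $T(\mathbf{w})$ a unit, every such $\mathbf{u}$ lies in $U$, so $|U|\gg p^{3(k-s)}$ and $p^{3m}|U|\gg p^{27l/10}$, which exceeds the target $p^{5l/2}$. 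Thus the claimed bound cannot come from counting after one Hensel step; one must extract further oscillation from $e_{p^{l}}(ap^{6s}T(\mathbf{w})^{2})$ on each stratum and solve the resulting recursion (this is where the factor $l$ really comes from). Quantifying that iteration is the entire content of the result you are implicitly invoking, so calling it bookkeeping and citing it away leaves the main assertion unproved. The primes $p=2,3$ (where $6$ is not a unit and the linear term degenerates) would also need a word, though that is minor.

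For comparison, the paper does not reprove these prime-power estimates at all: it quotes Lemmata 3.1 and 3.2 of the author's preprint \cite{Pli} for (\ref{kk}), the $l=2$ bound and (\ref{Sap}), and then deduces (\ref{kkk}) by multiplicativity exactly as you do. So your $l=1$ and $l=2$ arguments are a nice self-contained supplement to the paper's citation, but for the core case $l\geq 3$ your proposal ultimately rests on the same external reference while presenting an intermediate sketch that, as it stands, does not yield the stated bound.
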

\begin{proof}
Equations (\ref{kk}) and (\ref{Sap}) follow from Lemmata 3.1 and 3.2 of \cite{Pli} respectively. The bound for the case $l=2$ also follows from Lemma 3.1 of \cite{Pli}. We remind the reader of the estimates $$d(q)\ll q^{\varepsilon},\ \ \ \ \ \ \ \ \omega(q)\ll \log q/\log\log q,$$ where the functions $d(q)$ and $\omega(q)$ denote the number of divisors of $q$ and the number of prime divisors of $q$, respectively. Combining such bounds with the multiplicative property of $S(q,a)$ and the estimates for $S(p^{l},a)$ discussed before we obtain (\ref{kkk}).
\end{proof}The next lemma gathers the previous results to provide an upper bound for the auxiliary functions $h^{*}(\alpha)$ and $W^{*}(\alpha)$.
\begin{lem}\label{lem06}
Let $a\in\mathbb{Z}$ and $q\in\mathbb{N}$ with $(a,q)=1$. Take $\alpha\in \grM(a,q)$. Then,
$$h^{*}(\alpha)\ll \frac{q^{-1/2+\varepsilon}P^{3}}{1+n\lvert\beta\rvert}\ \ \ \ \ \ \text{and}\ \ \ \ \ \ W^{*}(\alpha)\ll \frac{q^{-1/2+\varepsilon}HM}{(\log P)(1+n\lvert\beta\rvert)}.$$
\end{lem}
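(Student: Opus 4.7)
The plan is to prove this by directly combining the uniform bounds already established in Lemmata \ref{lema4} and \ref{lem0}, together with the prime number theorem to handle the sum over $p$.

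For the first bound, I would start from the definition $h^{*}(\alpha)=V(\alpha,q,a)=q^{-3}S(q,a)c_{\eta}^{2}v(\beta)$ valid on $\grM(a,q)\subset\grM$. Inserting the estimate $S(q,a)\ll q^{5/2+\varepsilon}$ from (\ref{kkk}) and the pointwise bound $v(\beta)\ll P^{3}/(1+n|\beta|)$ from Lemma \ref{lema4}, and using that $c_{\eta}$ is an absolute constant, one immediately obtains
\[
h^{*}(\alpha)\ll q^{-3}\cdot q^{5/2+\varepsilon}\cdot\frac{P^{3}}{1+n|\beta|}=\frac{q^{-1/2+\varepsilon}P^{3}}{1+n|\beta|},
\]
which is the desired inequality. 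No cancellation is exploited beyond what is already in (\ref{kkk}), so this step is essentially mechanical.

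For the second bound, I would argue similarly, but now apply the bound to each summand $V_{p}(\alpha,q,a)=q^{-3}S(q,a)c_{\eta}^{2}v_{p}(\beta)$. The same input together with $v_{p}(\beta)\ll H/(1+n|\beta|)$ yields
\[
V_{p}(\alpha,q,a)\ll\frac{q^{-1/2+\varepsilon}H}{1+n|\beta|},
\]
uniformly in $p$. Summing over primes with $M/2\leq p\leq M$, the prime number theorem produces a factor $\ll M/\log M$, and since $M=P^{2/5}$ we have $\log M\asymp\log P$, giving the claimed upper bound
\[
W^{*}(\alpha)\ll\frac{q^{-1/2+\varepsilon}HM}{(\log P)(1+n|\beta|)}.
\]

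There is no real obstacle here: the lemma is a bookkeeping consequence of the Weyl-type bound on $S(q,a)$ and the stationary-phase style bounds on $v(\beta)$ and $v_{p}(\beta)$. The only point requiring a moment of care is extracting the $\log P$ savings in $W^{*}(\alpha)$, which is why the sum is restricted to primes rather than integers; using the prime number theorem (or Chebyshev's estimate) to count primes in $[M/2,M]$ supplies the $1/\log P$ factor.
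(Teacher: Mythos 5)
Your proposal is correct and is essentially the paper's own argument: the paper proves this lemma by simply combining Lemmata \ref{lema4} and \ref{lem0} through the definition (\ref{Vup}), exactly as you do, with the count of primes in $[M/2,M]$ supplying the $1/\log P$ factor in the bound for $W^{*}(\alpha)$.
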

\begin{proof}
This follows from Lemmata \ref{lema4} and \ref{lem0} via equation (\ref{Vup}).
\end{proof}

\section{Treatment of the singular series}\label{sec4}
In this section we discuss some convergence properties of the singular series and we analyse the local solubility of the problem. As a consequence, we derive a lower bound for the singular series for almost all integers. For such purposes, it is convenient to define, for $q\in\mathbb{N}$, the sums
$$S_{n}(q)=\sum_{\substack{a=1\\ (a,q)=1}}^{q}\big(q^{-3}S(q,a)\big)^{4}e_{q}(-na),\ \ \ \ \ \ \ \ \mathfrak{S}(n)=\sum_{q=1}^{\infty}S_{n}(q)$$and for each prime $p$, the infinite series
$$\sigma(p)=\sum_{l=0}^{\infty}S_{n}(p^{l}).$$
\begin{lem}\label{lema3}
One has that
\begin{equation*}\mathfrak{S}(n)=\prod_{p}\sigma(p),\end{equation*}
the singular series $\mathfrak{S}(n)$ converges absolutely and $\mathfrak{S}(n)\ll n^{\varepsilon}$. Also, when $Q> 0$ one gets
\begin{equation}\label{tup}\sum_{q\leq Q}q^{1/2}\lvert S_{n}(q)\rvert\ll (nQ)^{\varepsilon}\ \ \ \ \ \ \ \text{and}\ \ \ \ \ \ \ \ \sum_{q\geq Q}\lvert S_{n}(q)\rvert\ll n^{\varepsilon}Q^{\varepsilon-1/2}.\end{equation}Moreover, for any constant $\upsilon>0$ there exists a set $A_{\upsilon}\subset [1,N]$ with $\lvert A_{\upsilon}\rvert\ll N(\log N)^{-\upsilon}$ and such that for every $n\in [1,N]\setminus A_{\upsilon}$ one obtains
$$\mathfrak{S}(n)\gg (\log N)^{-\upsilon}.$$ 
\end{lem}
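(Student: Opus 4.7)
The plan is to prove the four assertions in sequence, starting with multiplicativity $S_n(q_1q_2)=S_n(q_1)S_n(q_2)$ for $(q_1,q_2)=1$, which I would establish by the Chinese Remainder Theorem. Parametrising $\mathbf{r}\pmod{q_1q_2}$ as $q_2\mathbf{r}_1+q_1\mathbf{r}_2$ and $a\pmod{q_1q_2}$ via the standard CRT decomposition, the identity $T(q_2\mathbf{r}_1+q_1\mathbf{r}_2)\equiv q_2^3 T(\mathbf{r}_1)+q_1^3 T(\mathbf{r}_2)\pmod{q_1q_2}$ (and its square) decomposes the exponent $aT(\mathbf{r})^2$ into independent mod-$q_1$ and mod-$q_2$ contributions. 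Summing fourth powers against $e_{q_1q_2}(-na)$ then factors through, yielding the Euler product $\mathfrak{S}(n)=\prod_p\sigma(p)$.

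The core step is a sharpened bound for $|S_n(p^l)|$, since the trivial application of (\ref{kkk}) yields only $|S_n(q)|\ll q^{-1+\varepsilon}$, far too weak for (\ref{tup}). For $l\geq 2$, the prime-power estimates in Lemma \ref{lem0} give $|p^{-3l}S(p^l,a)|^4\ll l^4 p^{-2l+\varepsilon}$ and hence $|S_n(p^l)|\ll l^4 p^{-l-1+\varepsilon}$. For $l=1$ I would expand $(p^{-3}S(p,a))^4$ using (\ref{Sap}) as $(p^{-1}S_2(p,a)+O(p^{-1}))^4$ and isolate the leading contribution
\[
p^{-4}\sum_{a=1}^{p-1}S_2(p,a)^4\,e_p(-na).
\]
Since $S_2(p,a)^4=\pm p^2$ independently of $(a,p)=1$ for odd $p$ (being the square of a quadratic Gauss sum), the inner sum collapses via $\sum_{a=1}^{p-1}e_p(-na)=-1$ for $p\nmid n$ and $p-1$ for $p\mid n$; after controlling the binomial cross-terms (each $\ll p^{-5/2}$), this produces $|S_n(p)|\ll p^{-3/2}$ when $p\nmid n$ and $|S_n(p)|\ll p^{-1}$ when $p\mid n$.

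Since $q^{1/2}|S_n(q)|$ is multiplicative,
\[
\sum_{q\leq Q}q^{1/2}|S_n(q)|\leq\prod_p\Bigl(1+\sum_{l\geq 1}p^{l/2}|S_n(p^l)|\Bigr).
\]
The generic Euler factor is $1+O(p^{-1})$ (from $l=1$ with $p\nmid n$, the $l\geq 2$ tail being $O(p^{-2+\varepsilon})$), while the $\omega(n)\ll\log n/\log\log n$ primes dividing $n$ contribute $1+O(p^{-1/2})$. Taking logarithms and summing, $\sum_{p\leq Q} p^{-1}\ll\log\log Q$ and $\sum_{p\mid n}p^{-1/2}\ll(\log n)^{1/2}$ (by Cauchy--Schwarz and the bound on $\omega(n)$), both of which are $\ll(nQ)^\varepsilon$. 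This proves the first bound of (\ref{tup}); the second follows by a dyadic decomposition, and absolute convergence together with $\mathfrak{S}(n)\ll n^\varepsilon$ are immediate corollaries.

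The main obstacle is the almost-all lower bound $\mathfrak{S}(n)\gg(\log N)^{-\upsilon}$. I would split the Euler product at a threshold $P_0$ chosen as a suitable power of $\log N$. For $p>P_0$ sufficiently large, the bounds above give $\sigma(p)=1+O(p^{-3/2+\varepsilon})$ uniformly in $n$, so $\prod_{p>P_0}\sigma(p)\gg 1$. For each $p\leq P_0$, the tail $\sum_{l\geq M}|S_n(p^l)|$ is negligible for suitably large $M=M(p)$, so $\sigma(p)$ is essentially determined by $n\pmod{p^M}$ and represents a $p$-adic local density. A local analysis --- essentially a Hensel-lifting count for the congruence $\sum_{i=1}^4 T(\mathbf{r}_i)^2\equiv n\pmod{p^M}$ --- should show that the fraction of residues $r\pmod{p^M}$ with $\sigma(p)\leq y$ is $O(y^\theta)$ for some absolute $\theta>0$. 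Summing over $p\leq P_0$ bounds the exceptional set by $\ll NP_0 y^\theta$; a careful calibration of $y$ and $P_0$ against $\upsilon$ produces both $|A_\upsilon|\ll N(\log N)^{-\upsilon}$ and, off $A_\upsilon$, $\prod_{p\leq P_0}\sigma(p)\gg y^{\pi(P_0)}\gg(\log N)^{-\upsilon}$. The delicate points are the uniformity of the local density estimate (especially at the anomalous primes $p=2,3$, where the cube structure creates genuine obstructions) and calibrating the parameters so that both bounds hold simultaneously.
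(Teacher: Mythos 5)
The first three-quarters of your proposal are sound and essentially parallel the paper: multiplicativity via CRT, the bounds $S_{n}(p^{l})\ll l^{4}p^{-l+\varepsilon}$ for $l\geq 2$ from Lemma \ref{lem0} (your stated $p^{-l-1+\varepsilon}$ is a small slip --- summing over the $\asymp p^{l}$ values of $a$ gives $p^{-l+\varepsilon}$, which suffices), and the $l=1$ analysis via (\ref{Sap}). Your Gauss-sum computation of $\sum_{(a,p)=1}(p^{-1}S_{2}(p,a))^{4}e_{p}(-na)$ (note $S_{2}(p,a)^{4}=p^{2}$, not $\pm p^{2}$) is a legitimate substitute for the paper's citations of Vaughan's Theorem 4.3 and Lemma 4.3, and it even hands you the positivity $S_{n}(p)\geq -Cp^{-3/2}$ for $p\mid n$ that the lower bound needs. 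The deduction of (\ref{tup}), absolute convergence and $\mathfrak{S}(n)\ll n^{\varepsilon}$ then goes through as you describe.

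The genuine gap is in the almost-all lower bound, which you yourself flag as the main obstacle but then only sketch generically, and the sketch does not work. First, the quantitative calibration fails: since $\sigma(p)\geq 1-Cp^{-3/2}$ uniformly (using the positivity above), the product over $p$ larger than an \emph{absolute constant} is already $\gg 1$, so taking $P_{0}$ a power of $\log N$ buys nothing; worse, if at each $p\leq P_{0}$ you only secure $\sigma(p)\geq y$ off a $p$-exceptional set, then $\prod_{p\leq P_{0}}\sigma(p)\geq y^{\pi(P_{0})}\gg(\log N)^{-\upsilon}$ forces $y=1-O\big((\log\log N)^{2}(\log N)^{-c}\big)$, at which point a density bound of the shape ``fraction with $\sigma(p)\leq y$ is $O(y^{\theta})$'' is vacuous (it is $\gg 1$). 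Second, and more importantly, the asserted local density estimate is exactly the hard content and is never proved; the truth, which the paper establishes, is structurally different: for every odd prime and \emph{every} $n$ one has a uniform positive lower bound ($M_{n}^{*}(p)>0$ for $p\geq 5$ by Lemma 4.2 of \cite{Pli}, a mod $27$ computation plus Cauchy--Davenport for $p=3$, then Hensel lifting, giving $\sigma(p)\geq p^{-11}$ resp. $3^{-33}$), so no exceptional set arises at odd primes at all. The only genuine obstruction is at $p=2$, where writing $2^{\gamma}\Vert n$ one shows $\sigma(2)\gg 2^{-\gamma}$ (and this loss is real: recall $n=2^{6+12j}$ from the introduction). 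The exceptional set is then simply $A_{\upsilon}=\{n\leq N:\ 2^{\gamma}\geq(\log N)^{\upsilon}\}$, of size $\ll N(\log N)^{-\upsilon}$, and off it $\mathfrak{S}(n)\gg 2^{-\gamma}\gg(\log N)^{-\upsilon}$. Without this explicit local solubility analysis --- in particular the special treatment of $p=2$ and $p=3$ --- your argument cannot be completed as outlined.
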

\begin{proof}
Recalling (\ref{Sap}), consider the exponential sum
$$S_{n,2}(q)=\sum_{\substack{a=1\\ (a,q)=1}}^{q}\big(q^{-1}S_{2}(q,a)\big)^{4}e_{q}(-na).$$ By Lemma \ref{lem0} one has that \begin{equation}\label{Sp}S_{n}(p)=S_{n,2}(p)+O(p^{-3/2}).\end{equation}Observe that an application of the same lemma yields $S_{n}(p^{l})\ll l^{4}p^{-l+\varepsilon}$ when $l\geq 3$ and $S_{n}(p^{2})\ll p^{-2}$. One can also deduce from equation (4.27) of Vaughan \cite[Theorem 4.3]{Vau} that whenever $p\nmid n$ then $S_{n,2}(p)\ll p^{-3/2}.$ Therefore, the combination of the previous estimates gives
 $$\displaystyle\sum_{l=1}^{\infty}\lvert S_{n}(p^{l})\rvert\ll p^{-3/2}.$$ Likewise, when $p\mid n,$ then an application of the aforementioned bounds for $S_{n}(p^{l})$ and  the estimate $S_{n,2}(p)\ll p^{-1}$, which is an immediate consequence of Vaughan \cite[Lemma 4.3]{Vau}, lead to $$\displaystyle\sum_{l=1}^{\infty}\lvert S_{n}(p^{l})\rvert\ll p^{-1}.$$ Therefore, by the preceeding discussion and the multiplicative property of $S_{n}(q)$, one gets the convergence for $\frak{S}(n)$ and the upper bound
$$\frak{S}(n)\ll \prod_{p\nmid n}\big(1+C_{1}p^{-3/2}\big)\prod_{p\mid n}\big(1+C_{2}p^{-1}\big)\ll n^{\varepsilon}$$ for some constants $C_{1},C_{2}>0$. Similarly, one finds that
$$\sum_{l=1}^{\infty}p^{l/2}\lvert S_{n}(p^{l})\rvert\ll p^{-\xi},$$ where $\xi=1$ if $p\nmid n$ and $\xi=1/2$ if $p\mid n$. Consequently, the combination of the above estimates yields the bound
$$\sum_{q\leq Q}q^{1/2}\lvert S_{n}(q)\rvert\ll \prod_{\substack{p\leq Q\\ p\nmid n}}(1+C_{1}p^{-1})\prod_{\substack{p\leq Q\\ p|n}}(1+C_{2}p^{-1/2})\ll (nQ)^{\varepsilon}.$$The second estimate in (\ref{tup}) follows observing that as a consequence of the above equation then
$$\sum_{Q\leq q\leq 2Q}\lvert S_{n}(q)\rvert\ll n^{\varepsilon}Q^{\varepsilon-1/2},$$ whence summing over dyadic intervals we obtain the desired result.

We will devote the rest of the section to prove the lower bound for the singular series. By equation (4.27) of Vaughan \cite[Theorem 4.3]{Vau} and (\ref{Sp}) then whenever $p\nmid n$ one has $S_{n}(p)\ll p^{-3/2}$. We can also deduce from the proof\footnote{See the argument just before \cite[Theorem 4.6]{Vau}} of \cite[Theorem 4.5]{Vau} that $S_{n,2}(p)\geq 0$ for $p\mid n$, which, combined with (\ref{Sp}), yields $S_{n}(p)\geq -C_{3}p^{-3/2}$ for some $C_{3}>0$. Consequently, using the bound $S_{n}(p^{l})\ll l^{4}p^{-l+\varepsilon}$ for $l\geq 2$ mentioned after (\ref{Sp}) one gets that in both cases then $\sigma(p)\geq 1-C_{4}p^{-3/2}$ for some $C_{4}>0,$ and hence there exists a constant $C>0$ for which
\begin{equation}\label{prod}\mathfrak{S}(n)\gg \prod_{p\leq C}\sigma(p).\end{equation}In order to give a more arithmetic description of $\sigma(p)$ we define for each $h\in\mathbb{N}$ the set $$\mathcal{M}_{n}(p^{h})=\Big\{\mathbf{Y}\in [1,p^{h}]^{12}:\ \sum_{i=1}^{4}T(\bfy_{i})^{2}\equiv n\mmod{p^{h}}\Big\},$$ and ${M}_{n}(p^{h})=\lvert\mathcal{M}_{n}(p^{h})\rvert$. Observe that orthogonality yields the identity
$$\sum_{l=0}^{h}S_{n}(p^{l})=p^{-11h}M_{n}(p^{h}),$$ and hence $\sigma(p)=\lim_{h\to \infty}p^{-11h}M_{n}(p^{h}).$ For further discussion, it is relevant to introduce the set $$\mathcal{M}_{n}^{*}(p^{h})=\Big\{\mathbf{Y}\in \mathcal{M}_{n}(p^{h}):\ \  p\nmid y_{1,1},\ p\nmid T(\mathbf{y}_{1})\Big\},$$ where $\mathbf{y}_{1}=(y_{1,1},y_{1,2},y_{1,3}),$ and ${M}_{n}^{*}(p^{h})=\lvert\mathcal{M}_{n}^{*}(p^{h})\rvert.$ By Lemma 4.2 of \cite{Pli} we have that whenever $p\neq 2,3$ then $M_{n}^{*}(p)>0$ for all $n$. Consequently, a standard application of Hensel's Lemma leads to $M_{n}(p^{h})\geq p^{11(h-1)}$, which yields $\sigma(p)\geq p^{-11}.$ For the cases $p=2,3$, it is convenient to define the set
\begin{equation*}\label{ec11.0}\mathcal{M}_{3,3}(p^{h})=\Big\{T(\bfx):\ \bfx\in\big(\mathbb{Z}/p^{h}\mathbb{Z}\big)^{3},\ (x_{1},p)=1\Big\}.\end{equation*} A slightly tedious computation reveals that $\mathcal{M}_{3,3}(27)$ is the set of residues not congruent to $4$ or $5$ modulo $9$. Therefore, one has that $$A=\Big\{x^{2}\mmod{27},\ \ \ \ x\in \mathcal{M}_{3,3}(27)\Big\}=\Big\{0,1,4,9,10,13,19,22\Big\}.$$ Consider the set $B=\{y\in A:\ (y,3)=1\big\}.$ Observe that then $$A+B=\Big\{1,2,4,5,8,10,11,13,14,17,19,20,22,23,26\Big\}.$$ Consequently, by Cauchy-Davenport (see \cite[Lemma 2.14]{Vau}) we find that $M_{n}^{*}(27)>0$, whence another application of Hensel's Lemma gives $M_{n}(3^{h})\geq 3^{11(h-3)}$, and hence $\sigma(3)\geq 3^{-33}.$

The rest of the discussion will be devoted to the analysis for the prime $p=2$. Take $\gamma\geq 0$ to be the exponent for which $2^{\gamma}|| n$ and let $\theta=\lfloor (\gamma-1)/2\rfloor$. A routine application of Hensel's Lemma reveals that $\mathcal{M}_{3,3}(2^{h})$ consists of all the residue classes modulo $2^{h}$. Therefore, when $\gamma\leq 2$, one has that $M_{n}^{*}(8)>0$, whence another application of Hensel's Lemma would yield $\sigma(2)\geq 2^{-33}.$ For the case $\gamma\geq 3$ then whenever $h\geq \gamma+2$ one can check that the congruence 
\begin{equation}\label{squares}x_{1}^{2}+x_{2}^{2}+x_{3}^{2}+x_{4}^{2}\equiv n\mmod{2^{h}}\end{equation} is soluble with solutions $x_{i}=2^{\theta}y_{i},$ where $y_{i}$ is defined modulo $2^{h-\theta}$ and 
\begin{equation}\label{mod8}y_{1}^{2}+y_{2}^{2}+y_{3}^{2}+y_{4}^{2}\equiv 2^{-2\theta}n \mmod{2^{h-2\theta}}\end{equation}
with $2\nmid y_{1}$. Note that (\ref{mod8}) has a solution modulo $8$ with $2\nmid y_{1}$, and hence by Lemma 2.13 of \cite{Vau} there are at least  $2^{3(h-2\theta-3)}\times 2^{4\theta}$ solutions to (\ref{squares}). By the same lemma, one has that the number of solutions to $$z_{1}^{3}+z_{2}^{3}+z_{3}^{3}\equiv x_{i}\mmod{2^{h}}$$ is bounded below by $2^{2(h-1)}$. Consequently, we obtain $M_{n}(2^{h})\geq 2^{11h-\gamma-16}$, which delivers $\sigma (2)\gg 2^{-\gamma}.$

To finish the proof we take $A_{\upsilon}\subset [1,N]$ to be the set of numbers with $2^{\gamma}\geq (\log N)^{\upsilon}$. Observe that $\lvert A_{\upsilon}\rvert \leq N(\log N)^{-\upsilon}$. Then, by the preceeding discussion and (\ref{prod}) it follows that whenever $n\notin A_{\upsilon}$ one has
$$\mathfrak{S}(n)\gg (\log N)^{-\upsilon}.$$

\end{proof}

\section{Mean values of the error term over the major arcs}\label{sec5}
Before proving an estimate for the first and second moment of $\mathcal{F}(\alpha)$ over $\grN$ and $\grM\setminus \grN$ respectively we will present some major arc type bounds that will be used later on the proof. For such matters, it is convenient to introduce the auxiliary multiplicative function $w_{2}(q)$, defined for prime powers by taking
\begin{equation}\label{wuok}w_{2}(p^{6u+v})=\left\{
	       \begin{array}{ll}
         p^{-u-v/6}\ \ \ \ \ \ \ \text{when $u\geq 1$ and $1\leq v\leq 6$}   \\
         p^{-1}\ \ \ \ \ \ \ \  \ \ \ \ \ \text{when $u=0$ and $2\leq v\leq 6$}    \\
         p^{-1/2}\ \ \ \ \ \ \  \ \ \ \ \text{when $u=0$ and $v=1$.}    \\
    \end{array}  \right. \end{equation}
\begin{lem}\label{prop4}
Let $a\in\mathbb{Z}$ and $q\in\mathbb{N}$ with $(a,q)=1$ and take $\alpha\in\grM(a,q)$. Denote $\beta=\alpha-a/q.$ Then, 
$$h(\alpha)\ll \frac{q^{\varepsilon}w_{2}(q)P^{3}}{1+n\lvert \beta\rvert},$$ and for $\lvert\beta\rvert\leq (12q)^{-1}H^{1/3}n^{-1}$ and $q\leq M/4$ we have that
\begin{equation}\label{lam}W(\alpha)\ll \frac{q^{\varepsilon}w_{2}(q)HM}{(\log P)(1+n\lvert \beta\rvert)}.\end{equation}
\end{lem}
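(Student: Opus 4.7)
The plan is to deduce this lemma from two ingredients: a pointwise estimate for the complete exponential sum, showing that $q^{-3}|S(q,a)| \ll q^{\varepsilon} w_{2}(q)$, and a major-arc style approximation (upper bound only) for $h(\alpha)$ and $W(\alpha)$ valid on all of $\grM$, analogous to Lemmata \ref{lem1} and \ref{lem3}.

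\emph{Step 1 (estimate for $q^{-3}|S(q,a)|$).} I would verify the bound on prime powers using Lemma \ref{lem0}. For $q=p$, the identity $S(p,a)=p^{2}S_{2}(p,a)+O(p^{2})$ combined with the Gauss sum bound $|S_{2}(p,a)|\ll p^{1/2}$ gives $p^{-3}|S(p,a)|\ll p^{-1/2}=w_{2}(p)$. For $q=p^{2}$ the estimate $|S(p^{2},a)|\ll p^{5}$ yields $p^{-6}|S(p^{2},a)|\ll p^{-1}=w_{2}(p^{2})$. For $l\geq 3$, $|S(p^{l},a)|\ll l\,p^{5l/2+\varepsilon}$ furnishes $p^{-3l}|S(p^{l},a)|\ll l\,p^{-l/2+\varepsilon}$, which comfortably beats $w_{2}(p^{l})$ since the latter is at worst $p^{-l/6}$. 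Multiplicativity of $S(q,a)$ and $w_{2}(q)$, together with $d(q)\ll q^{\varepsilon}$ and $\omega(q)\ll\log q/\log\log q$ (as already used in the proof of Lemma \ref{lem0}), extend the estimate to all $q$.

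\emph{Step 2 (bound for $h(\alpha)$).} Writing $\alpha=a/q+\beta$ and splitting $\mathbf{y}\in\mathcal{H}$ into residue classes modulo $q$ via $\mathbf{y}=q\mathbf{z}+\mathbf{r}$, the phase factors as $e_{q}(aT(\mathbf{r})^{2})\,e(\beta T(q\mathbf{z}+\mathbf{r})^{2})$. Applying partial summation to the inner sum over $\mathbf{z}$, using the smoothness asymptotic (\ref{smo}) to extract the density $c_{\eta}^{2}$ from the constraints $y_{2},y_{3}\in\mathcal{A}(P,P^{\eta})$, and comparing the resulting sum to the integral $v(\beta)$, one obtains an approximation of the form $h(\alpha)=V(\alpha,q,a)+E$ with an admissible error. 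This is precisely the content of Lemma 7.3 of \cite{Pli} invoked in Lemma \ref{lem1}, but now retained only as an upper bound, which allows $q$ to range up to $P^{4/5}$ rather than being restricted to $(\log P)^{\tau}$. Combining Step 1 with the bound $|v(\beta)|\ll P^{3}/(1+n|\beta|)$ from Lemma \ref{lema4} then delivers $|h(\alpha)|\ll q^{\varepsilon}w_{2}(q)P^{3}/(1+n|\beta|)$, and the error $E$ is dominated by this main term for $\alpha\in\grM$.

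\emph{Step 3 (bound for $W(\alpha)$).} Decomposing $W(\alpha)=\sum_{M/2\leq p\leq M}W_{p}(\alpha)$ with $W_{p}(\alpha)=\sum_{\mathbf{x}\in\mathcal{W}}e(\alpha T(p\mathbf{x})^{2})$, the argument of Step 2 applied to each $W_{p}$ (at scale $H^{1/3}$ rather than $P$) yields $|W_{p}(\alpha)|\ll q^{\varepsilon}w_{2}(q)H/(1+n|\beta|)$ by Lemma \ref{lema4}. The hypotheses $q\leq M/4$ and $|\beta|\leq (12q)^{-1}H^{1/3}n^{-1}$ guarantee that the residue class decomposition is compatible with the box of integration of $v_{p}(\beta)$ uniformly in $p$, so that the error term behaves well. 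Summing over primes $p\in[M/2,M]$, of which there are $\asymp M/\log P$ by the prime number theorem, furnishes (\ref{lam}).

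\emph{Main obstacle.} The principal technical point is verifying that the residue-class-to-integral comparison survives the interaction with the smoothness condition on $(y_{2},y_{3})$ when $q$ is allowed to grow as large as $P^{4/5}$; in Lemma \ref{lem1} this was only done for $q\leq (\log P)^{\tau}$. The extension rests on the flexibility of the machinery of Lemma 7.3 of \cite{Pli}, which allows the main-term asymptotic to be degraded into a clean upper bound at the expense of losing the lower-order error. For $W(\alpha)$, ensuring uniformity over $p\in[M/2,M]$ under the stated restriction on $\beta$ is the additional delicate point.
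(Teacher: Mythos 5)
Your Step 1 is fine: the computation $q^{-3}\lvert S(q,a)\rvert\ll q^{\varepsilon}w_{2}(q)$ does follow from Lemma \ref{lem0} by multiplicativity, and it correctly bounds the main term $V(\alpha,q,a)$ (indeed the paper's Lemma \ref{lem06} records an even sharper bound). The genuine gap is in Steps 2 and 3, at exactly the point you flag as the ``main obstacle'' and then dismiss. The route via Lemma 7.3 of \cite{Pli}, i.e.\ writing $h(\alpha)=V(\alpha,q,a)+E$ and keeping only an upper bound, cannot work on the wide arcs $\grM$: extracting the density $c_{\eta}^{2}$ for the smooth variables rests on (\ref{smo}), whose error saves only one factor of $\log P$, so $E$ is of size at least about $P^{3}(\log P)^{-1}$ no matter how the comparison with $v(\beta)$ is organised (and at least about $HM(\log P)^{-2}$ for $W$). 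On $\grM$ the claimed main term $q^{\varepsilon}w_{2}(q)P^{3}(1+n\lvert\beta\rvert)^{-1}$ can be as small as roughly $P^{11/5}$, since $q$ may be as large as $P^{4/5}$ and $n\lvert\beta\rvert$ as large as $P^{4/5}/q$; hence the error $E$ is not ``dominated by the main term for $\alpha\in\grM$'' --- it swamps it except on arcs with $q$ and $n\lvert\beta\rvert$ of at most logarithmic size. This is precisely why the paper confines the approximation lemmas (Lemmata \ref{lem1} and \ref{lem3}) to the narrow arcs $\grN$, a point made explicitly just before Proposition \ref{lem7}.

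The paper's proof of Lemma \ref{prop4} therefore does not go through $V(\alpha,q,a)$ at all: it cites Lemmata 3.1 and 3.2 of \cite{Pli1} (combined with Lemma 5.2 there), which give direct upper bounds of the shape
$$h(\alpha)\ll q^{\varepsilon}w_{2}(q)P^{3}(1+n\lvert\beta\rvert)^{-1}+P^{2}q^{1+\varepsilon}w_{2}(q),\qquad W(\alpha)\ll \frac{q^{\varepsilon}w_{2}(q)HM}{(\log P)(1+n\lvert\beta\rvert)}+\frac{MH^{2/3}q^{1+\varepsilon}w_{2}(q)}{\log P},$$
with \emph{power-saving} secondary terms; morally these come from carrying out the complete-sum-plus-partial-summation analysis only in the unrestricted variable and estimating the smooth variables with absolute values, so that no appeal to (\ref{smo}) and its weak $\log$-saving is needed. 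The only argument then required, and the only one the paper gives, is that on $\grM(a,q)$ (respectively on the range $q\leq M/4$, $\lvert\beta\rvert\leq(12q)^{-1}H^{1/3}n^{-1}$ for $W$) one has $(1+n\lvert\beta\rvert)^{-1}\geq qP^{-1}$ (respectively $\geq qH^{-1/3}$), so the first term dominates the second. To repair your write-up you would need either to prove upper bounds of this strength with power-type errors uniform in $q\leq P^{4/5}$ --- which is the real content of the cited lemmas of \cite{Pli1}, not a ``degraded'' form of Lemma 7.3 of \cite{Pli} --- or to find some other way around the $\log$-only saving in the smooth-number count; as written, Steps 2 and 3 do not establish (\ref{lam}) or the bound for $h(\alpha)$ on all of $\grM$.
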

\begin{proof}
Lemmata 3.1 and 5.2 of \cite{Pli1} yield the bounds
$$h(\alpha)\ll q^{\varepsilon}w_{2}(q)P^{3}(1+n\lvert\beta\rvert)^{-1}+P^{2}q^{1+\varepsilon}w_{2}(q)$$whenever $\alpha\in\grM(a,q).$ Observe that $(1+n\lvert\beta\rvert)^{-1}\geq qP^{-1}$ when $\alpha\in\grM(a,q)$, and so the first term on the right side of the above equation dominates over the second one. Likewise, Lemmata 3.2 and 5.2 of \cite{Pli1} deliver $$W(\alpha)\ll q^{\varepsilon}w_{2}(q)MH(\log P)^{-1}(1+n\lvert \beta\rvert)^{-1}+MH^{2/3}q^{1+\varepsilon}w_{2}(q)(\log P)^{-1}$$ for the range described just before (\ref{lam}). Noting that $(1+n\lvert\beta\rvert)^{-1}\geq qH^{-1/3}$ we find that the first term also dominates over the second one in the above equation. The preceeding discussion then provides the lemma.
\end{proof}

\begin{lem}\label{flo}
For $q\in\mathbb{N}$ and every $Q>0$ one finds that $w_{2}(q)\leq q^{-1/6}$. Moreover, one has \begin{equation*}\sum_{q\leq Q}w_{2}(q)^{2}\ll Q^{\varepsilon}\ \ \ \ \ \ \ \ \ \ \  \text{and}\ \ \ \ \ \ \ \ \ \ \  \sum_{q\leq Q}w_{2}(q)^{2+\delta}\ll 1\end{equation*} for any $\delta>0$.
\end{lem}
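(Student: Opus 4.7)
The plan is to first verify the pointwise estimate $w_{2}(q) \leq q^{-1/6}$ on prime powers and extend it by multiplicativity, and then deduce the two moment bounds via an Euler product analysis of the Dirichlet series associated to $w_{2}(q)^{r}$.

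For the pointwise bound, I would examine the three cases in the definition (\ref{wuok}). When $q = p^{k}$ with $k = 6u+v$ and $u \geq 1$, one has $w_{2}(p^{k}) = p^{-u-v/6} = p^{-k/6}$ with equality. When $2 \leq k \leq 6$ and $u=0$, we have $w_{2}(p^{k}) = p^{-1}$, and since $k/6 \leq 1$ one obtains $p^{-1} \leq p^{-k/6}$. Finally for $k=1$ we have $w_{2}(p) = p^{-1/2} \leq p^{-1/6}$ since $p \geq 2$. Because $w_{2}$ is multiplicative by hypothesis, these prime-power bounds combine to give $w_{2}(q) \leq q^{-1/6}$ for every $q \in \mathbb{N}$.

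For the moment bounds, for $r > 0$ and $s \geq 0$ I would consider the Dirichlet series
$$D_{r}(s) = \sum_{q=1}^{\infty} \frac{w_{2}(q)^{r}}{q^{s}} = \prod_{p} \left(1 + \sum_{k=1}^{\infty} \frac{w_{2}(p^{k})^{r}}{p^{ks}}\right).$$
Inspection of (\ref{wuok}) shows that within each local factor the dominant contribution comes from the term $k=1$, which equals $p^{-r/2 - s}$, while all terms with $k \geq 2$ contribute $O(p^{-r - ks}) = O(p^{-r - 2s})$. Hence
$$1 + \sum_{k=1}^{\infty} \frac{w_{2}(p^{k})^{r}}{p^{ks}} \leq 1 + p^{-r/2 - s} + O(p^{-r - 2s}),$$
and the Euler product converges whenever $r/2 + s > 1$.

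Taking $r = 2$ and $s = \varepsilon$ gives $D_{2}(\varepsilon) \ll 1$ for any $\varepsilon > 0$, so
$$\sum_{q \leq Q} w_{2}(q)^{2} \leq Q^{\varepsilon} \sum_{q \leq Q} \frac{w_{2}(q)^{2}}{q^{\varepsilon}} \leq Q^{\varepsilon} D_{2}(\varepsilon) \ll Q^{\varepsilon},$$
which is the first claim. For the second claim, taking $r = 2 + \delta$ and $s = 0$ makes $r/2 = 1 + \delta/2 > 1$, so $D_{2+\delta}(0) \ll 1$ and $\sum_{q \leq Q} w_{2}(q)^{2+\delta} \leq D_{2+\delta}(0) \ll 1$ uniformly in $Q$. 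There is no substantive obstacle here; the argument is a routine Euler product estimate, the only point requiring care being the verification that the $k=1$ local term, which is the critical one for convergence, becomes summable over $p$ precisely once an extra $\varepsilon$ (or $\delta/2$) of saving is introduced.
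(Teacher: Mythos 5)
Your proof is correct and follows the same route the paper intends: the paper disposes of the lemma in one line ("follows from the definition (\ref{wuok}) and multiplicativity"), and your case check $w_{2}(p^{6u+v})\leq p^{-(6u+v)/6}$ together with the Euler product/Rankin-type bound with $s=\varepsilon$ (resp.\ $s=0$, $r=2+\delta$) is exactly the standard way to fill in that routine verification. No gaps.
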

\begin{proof}
Both estimates follow from the definition (\ref{wuok}) and the fact that $w_{2}(q)$ is multiplicative. 
\end{proof}
Combining the previous technical lemmas, we provide bounds for the $L^{1}$-norm of $\mathcal{F}(\alpha)$ over the set of arcs $\grN$ and the $L^{2}$-norm of the same function over $\grM\setminus\grN$ which are good enough for our purposes.
\begin{prop}\label{prop11}
One has that
\begin{equation}\label{tus}\int_{\grN}\big\lvert \mathcal{F}(\alpha)\big\rvert d\alpha\ll (HM)^{2}(\log P)^{3\tau/2-3+\varepsilon}.
\end{equation}
\end{prop}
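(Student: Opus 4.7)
The plan is to write, for $\alpha \in \grN$, $h(\alpha) = h^{*}(\alpha) + \Delta_{h}(\alpha)$ and $W(\alpha) = W^{*}(\alpha) + \Delta_{W}(\alpha)$, where Lemmas \ref{lem1} and \ref{lem3} supply the uniform bounds $|\Delta_{h}(\alpha)| \ll P^{3}(\log P)^{\tau - 1 + \varepsilon}$ and $|\Delta_{W}(\alpha)| \ll HM(\log P)^{\tau - 2 + \varepsilon}$. Substituting and expanding, the identity $\mathcal{F}(\alpha) = h^{2}W^{2} - (h^{*})^{2}(W^{*})^{2}$ decomposes the $L^{1}$-norm in (\ref{tus}) into a finite sum of contributions from the monomials $(h^{*})^{i}\Delta_{h}^{2-i}(W^{*})^{j}\Delta_{W}^{2-j}$ over the indices $0 \leq i,j \leq 2$ other than $(i,j) = (2,2)$, which cancels.

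For each such monomial I bound every occurrence of $h^{*}$ and $W^{*}$ pointwise by Lemma \ref{lem06}, namely
$$h^{*}(\alpha) \ll \frac{q^{-1/2+\varepsilon}P^{3}}{1+n|\beta|} \qquad \text{and} \qquad W^{*}(\alpha) \ll \frac{q^{-1/2+\varepsilon}HM}{(\log P)(1+n|\beta|)},$$
and replace each $\Delta$-factor by its absolute bound. Integrating over $\beta \in [-(\log P)^{\tau}(qn)^{-1}, (\log P)^{\tau}(qn)^{-1}]$ produces a factor $n^{-1}$ whenever at least two reciprocal factors remain in the integrand, the summation over $a$ coprime to $q$ contributes a factor $q$, and the summation over $q \leq (\log P)^{\tau}$ is handled by the elementary estimate $\sum_{q \leq X} q^{-1/2} \ll X^{1/2}$.

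The two dominant monomials turn out to be $(h^{*})^{2}W^{*}\Delta_{W}$ and $h^{*}(W^{*})^{2}\Delta_{h}$, each admitting the pointwise bound
$$\ll \frac{q^{-3/2+\varepsilon}P^{6}(HM)^{2}(\log P)^{\tau - 3 + \varepsilon}}{(1+n|\beta|)^{3}}.$$
Combining the $\beta$-integration (factor $n^{-1}$), the $a$-summation (factor $q$) and $\sum_{q \leq (\log P)^{\tau}} q^{-1/2} \ll (\log P)^{\tau/2}$, together with $n \asymp P^{6}$, produces precisely $(HM)^{2}(\log P)^{3\tau/2 - 3 + \varepsilon}$, matching the right-hand side of (\ref{tus}). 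Every other monomial carries at least two $\Delta$-factors, which convert the logarithmic exponent of $(h^{*}W^{*})^{i+j}$ into extra savings; a routine calculation shows that these remaining terms contribute at most $(HM)^{2}(\log P)^{3\tau - 4 + \varepsilon}$, which for $\tau = 18/31$ is strictly smaller than the target.

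The main technical obstacle is purely one of bookkeeping: for each of the eight monomials one must decide how many of the reciprocal factors $(1+n|\beta|)^{-1}$ to retain before $\beta$-integrating, so that the integral yields the crucial saving $n^{-1} \asymp P^{-6}$ that cancels the $P^{6}$ coming from the four factors originating from $h$ or $W$, while simultaneously ensuring that the resulting exponent of $\log P$ after the $q$-summation remains at or below $3\tau/2 - 3 + \varepsilon$. Once this allocation is fixed for every monomial, the estimate reduces to the routine verifications sketched above.
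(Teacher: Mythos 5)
Your argument is correct, and it follows the same basic strategy as the paper (approximate $h$ and $W$ on $\grN$ by $h^{*}$ and $W^{*}$, then perform a standard major-arc integration and $q$-summation), but the mechanics differ in one genuine respect. The paper splits $\mathcal{F}=(h^{2}-h^{*2})W^{2}+h^{*2}(W^{2}-W^{*2})$ and, because it keeps the un-approximated factor $\lvert W(\alpha)\rvert^{2}$ in the first term, it needs the auxiliary mean-value estimate $\sum_{a=1}^{q}\lvert W(\beta+a/q)\rvert^{2}\ll q^{\varepsilon}(HM)^{2}(\log P)^{-2}$, obtained by counting solutions of the congruence $T(p_{1}\mathbf{x}_{1})^{2}\equiv T(p_{2}\mathbf{x}_{2})^{2}\mmod{q}$. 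You instead substitute $W=W^{*}+\Delta_{W}$ (and $h=h^{*}+\Delta_{h}$) everywhere and expand into the eight monomials, so you only ever need the pointwise bounds of Lemmata \ref{lem1}, \ref{lem3} and \ref{lem06}; the $(\log P)^{-1}$ in the bound for $W^{*}$ supplies the same prime-counting saving that the paper extracts from its congruence count. Your identification of the dominant monomials $(h^{*})^{2}W^{*}\Delta_{W}$ and $h^{*}(W^{*})^{2}\Delta_{h}$, each yielding $(HM)^{2}(\log P)^{3\tau/2-3+\varepsilon}$ after the $\beta$-integration, the $a$-sum and $\sum_{q\leq(\log P)^{\tau}}q^{-1/2}\ll(\log P)^{\tau/2}$, matches the paper's dominant contribution exactly, and your claim about the remaining monomials checks out: those with two reciprocal factors give $(\log P)^{3\tau-4+\varepsilon}$, those with one give $(\log P)^{9\tau/2-5+\varepsilon}$, and the pure $\Delta$-term gives $(\log P)^{6\tau-6+\varepsilon}$ (using the measure of $\grN$), all of which are admissible precisely because $\tau=18/31<2/3$. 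The trade-off is that your route handles more terms but is more elementary, avoiding the congruence-counting input; note, however, that the paper's estimate (\ref{Wint}) is valid for all $q$ up to $P^{4/5}$ and is reused in Proposition \ref{lem7} on $\grM\setminus\grN$, where your $\Delta_{W}$ bound (valid only on $\grN$) would not be available, so the paper's extra ingredient is not redundant in the wider scheme.
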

\begin{proof}
Recalling Lemmata \ref{lem1} and \ref{lem06} it follows that whenever $\alpha\in\grN(a,q)\subset \grN$ then
\begin{equation}\label{hal}h(\alpha)^{2}-h^{*}(\alpha)^{2}\ll P^{6}(\log P)^{\tau-1+\varepsilon}\Big((\log P)^{\tau-1}+q^{-1/2}(1+n\lvert\beta\rvert)^{-1}\Big).\end{equation} Likewise, by Lemmata \ref{lem3} and \ref{lem06} we have that for $\alpha\in\grN(a,q)\subset\grN$ then
\begin{equation}\label{Wal}W(\alpha)^{2}-W^{*}(\alpha)^{2}\ll (HM)^{2}(\log P)^{\tau-3+\varepsilon}\Big((\log P)^{\tau-1}+q^{-1/2}(1+n\lvert\beta\rvert)^{-1}\Big).\end{equation}
Denote by $N(q)$ the number of solutions of the congruence 
$$T(p_{1}\mathbf{x}_{1})^{2}\equiv T(p_{2}\mathbf{x}_{2})^{2}\mmod {q},$$ where $\mathbf{x}_{i}\in [1,H^{1/3}]^{3}$ and $M/2\leq p_{i}\leq M$. By expressing $q$ as the product of prime powers, using the structure of the ring of integers of those prime powers and noting that the number of primes dividing $q$ is bounded by $q^{\varepsilon}$, we obtain
\begin{equation*}\label{Nq}N(q)\ll q^{\varepsilon-1}(HM)^{2}(\log P)^{-2},\end{equation*}and hence orthogonality delivers \begin{equation}\label{Wint}\sum_{a=1}^{q}\lvert W(\beta+a/q)\rvert^{2}\leq qN(q)\ll q^{\varepsilon}(HM)^{2}(\log P)^{-2}.\end{equation}
Integrating over the major arcs and applying (\ref{hal}) and (\ref{Wint}) one gets
\begin{align*}\label{ugu}&\int_{\grN}\big\lvert h(\alpha)^{2}-h^{*}(\alpha)^{2}\big\rvert \lvert W(\alpha)\rvert^{2}d\alpha\ll (HM)^{2}(\log P)^{3\tau-4+\varepsilon}\sum_{q\leq (\log P)^{\tau}}q^{-1}
\\
&+(HM)^{2}(\log P)^{\tau-3+\varepsilon}\sum_{q\leq (\log P)^{\tau}}q^{-1/2}\ll (HM)^{2}(\log P)^{3\tau/2-3+\varepsilon}.\nonumber
\end{align*}
Similarly, using  Lemma \ref{lem06} and (\ref{Wal}) we obtain
\begin{align*}
\int_{\grN}\lvert h^{*}(\alpha)\rvert ^{2}\big\lvert W(\alpha)^{2}-W^{*}(\alpha)^{2}\big\rvert d\alpha\ll (HM)^{2}(\log P)^{3\tau/2-3+\varepsilon}.
\end{align*}
Equation (\ref{tus}) then holds combining the previous estimates and the triangle inequality.
\end{proof}
Observe that the error term in Lemmata \ref{lem1} and \ref{lem3} when we approximate $h(\alpha)$ and $W(\alpha)$ by $h^{*}(\alpha)$ and $W^{*}(\alpha)$ respectively is non-trivial only for the set of small major arcs $\grN$. For the wider major arcs, we obtain instead an almost all result via Bessel's inequality.
\begin{prop}\label{lem7}
One has that
\begin{equation*}\int_{\grM\setminus \grN}\big\lvert \mathcal{F}(\alpha)\big\rvert^{2}d\alpha\ll P^{6}(HM)^{4}(\log P)^{-4-2\tau/3+\varepsilon}.\end{equation*}
\end{prop}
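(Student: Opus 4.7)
My plan for the proof is to carry out an $L^{2}$ analogue of the argument from Proposition~\ref{prop11}. I start from the decomposition
\[
\mathcal{F}(\alpha) = \bigl(h(\alpha)^{2} - h^{*}(\alpha)^{2}\bigr)W(\alpha)^{2} + h^{*}(\alpha)^{2}\bigl(W(\alpha)^{2} - W^{*}(\alpha)^{2}\bigr),
\]
and reduce via the triangle inequality to separately controlling the $L^{2}$ norms of the two summands on $\grM\setminus\grN$. Each summand is further factored using $h^{2}-h^{*2}=(h-h^{*})(h+h^{*})$ and the analogue for $W$.

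For the undifferenced factors I appeal to the pointwise bounds of Lemmas~\ref{prop4} and~\ref{lem06}: on $\grM(a,q)$ one has
\[
|h(\alpha)|+|h^{*}(\alpha)| \ll \frac{q^{\varepsilon}w_{2}(q)P^{3}}{1+n|\beta|}, \qquad |W(\alpha)|+|W^{*}(\alpha)| \ll \frac{q^{\varepsilon}w_{2}(q)HM}{(\log P)(1+n|\beta|)}.
\]
The differences $h-h^{*}$ and $W-W^{*}$, which are the essential ingredients here, I plan to handle via strengthenings of Lemmas~\ref{lem1} and~\ref{lem3} valid throughout $\grM\setminus\grN$. These should emerge from the underlying Lemma~7.3 of \cite{Pli} applied with the broader parameter $X=P^{4/5}$, and should yield pointwise bounds of the same shape as those for $h^{*}$ and $W^{*}$ themselves in Lemma~\ref{lem06}.

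Inserting these bounds into $|\mathcal{F}|^{2}$ and integrating over each $\grM(a,q)$, I obtain $O(1/n)$ from the $\beta$-integration and a factor $\phi(q)\leq q$ from summing over $a$ coprime to $q$. The remaining $q$-summation over $(\log P)^{\tau} < q \leq P^{4/5}$ is controlled via a Rankin-type tail estimate: using $w_{2}(q)\leq q^{-1/6}$ from Lemma~\ref{flo} together with the convergent bound $\sum_{q} w_{2}(q)^{2+\delta} \ll 1$, a splitting of the form $w_{2}(q)^{k}=w_{2}(q)^{2+\delta}\cdot w_{2}(q)^{k-2-\delta}\leq w_{2}(q)^{2+\delta}q^{-(k-2-\delta)/6}$ for the appropriate moment $k$ (coming from the combined powers of $w_{2}$ in the pointwise bound) extracts a saving in excess of $(\log P)^{-2\tau/3}$. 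Combined with $n\asymp P^{6}$, this yields the proposition.

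The principal technical hurdle is verifying the required pointwise approximations for $h-h^{*}$ and $W-W^{*}$ uniformly on $\grM\setminus\grN$, not merely on $\grN$. For $W$ this is particularly delicate because Lemma~\ref{prop4} is only stated for the inner range $|\beta|\leq (12q)^{-1}H^{1/3}n^{-1}$ of each arc; the outer portion must be handled either by a subdissection accompanied by an auxiliary Weyl-type bound for the quadratic exponential sum $\sum_{h}b_{h}e(\alpha p^{6}h^{2})$ inside $W$, or by replacing the pointwise estimate there with a fourth-moment mean value bound integrated over the complementary region. Reconciling this range mismatch is, I expect, the main technical work, after which the integration and summation scheme described above mechanically produces the claimed bound.
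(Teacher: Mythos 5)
There is a genuine gap at the heart of your plan. Your decomposition factors out the differences $h-h^{*}$ and $W-W^{*}$ and relies on ``strengthenings of Lemmas~\ref{lem1} and~\ref{lem3} valid throughout $\grM\setminus\grN$'' with errors of the quality of the bounds in Lemma~\ref{lem06}. No such strengthenings are available, and they cannot emerge from Lemma~7.3 of \cite{Pli} applied with $X=P^{4/5}$: the error terms in Lemmata~\ref{lem1} and~\ref{lem3} are of size $P^{3}(\log P)^{\tau-1+\varepsilon}$ and $HM(\log P)^{\tau-2+\varepsilon}$, stemming ultimately from the smooth-number count (\ref{smo}), which saves only a single power of $\log$; such an approximation is useful only while $q$ and $n\lvert\beta\rvert$ are bounded by powers of $\log P$, which is exactly the remark made in the paper just before Proposition~\ref{lem7} and is the entire reason the narrower arcs $\grN$ and the Bessel-inequality treatment of $\grM\setminus\grN$ exist. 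For $q$ as large as $P^{4/5}$ a pointwise bound for $h-h^{*}$ of the shape $q^{-1/2+\varepsilon}P^{3}(1+n\lvert\beta\rvert)^{-1}$ is simply not on offer (the trivial route $\lvert h-h^{*}\rvert\leq\lvert h\rvert+\lvert h^{*}\rvert$ only gives the weaker $q^{\varepsilon}w_{2}(q)$ in place of $q^{-1/2}$, with $w_{2}(q)\leq q^{-1/6}$), so as written the key step of your argument fails.

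The repair is to abandon the differencing on $\grM\setminus\grN$ altogether: the paper simply bounds $\lvert\mathcal{F}(\alpha)\rvert^{2}\ll\lvert h(\alpha)\rvert^{4}\lvert W(\alpha)\rvert^{4}+\lvert h^{*}(\alpha)\rvert^{4}\lvert W^{*}(\alpha)\rvert^{4}$ and estimates the two pieces separately, the second directly from Lemma~\ref{lem06}, the first from the pointwise bound for $h$ in Lemma~\ref{prop4} together with the mean value over numerators, $\sum_{a=1}^{q}\lvert W(\beta+a/q)\rvert^{2}\ll q^{\varepsilon}(HM)^{2}(\log P)^{-2}$ (equation (\ref{Wint})), and Lemma~\ref{flo}. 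Note also that your proposed fix for the range restriction in (\ref{lam}) (a subdissection with a Weyl-type bound for $\sum_{h}b_{h}e(\alpha p^{6}h^{2})$, whose non-smooth weights $b_{h}$ make such a bound problematic) is not needed: the contribution of the arcs with $q>M/4$ or $\lvert\beta\rvert>(12q)^{-1}H^{1/3}n^{-1}$ is disposed of by (\ref{Wint}) and Lemma~\ref{flo} alone, giving $O\big((HM)^{4}P^{6-\delta}\big)$, while on the inner range one uses the pointwise bound for only two of the four factors of $W$ and (\ref{Wint}) for the remaining $\lvert W\rvert^{2}$; your Rankin-type splitting of $w_{2}(q)^{6}$ then correctly produces the $(\log P)^{-2\tau/3}$ saving from the condition $q>(\log P)^{\tau}$ or $n\lvert\beta\rvert>(\log P)^{\tau}/q$ defining $\grM\setminus\grN$.
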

\begin{proof}
Note that $\lvert \mathcal{F}(\alpha)\rvert^{2}\ll \lvert h(\alpha)\rvert^{4}\lvert W(\alpha)\rvert^{4}+\lvert h^{*}(\alpha)\rvert^{4}\lvert W^{*}(\alpha)\rvert^{4}.$
For bounding the above integral we make use of standard major arc techniques and we exploit the extra number of variables that we get by taking squares. Before going into the proof, it is convenient to define $\Upsilon_{\varepsilon}(\alpha)$ for $\alpha\in [0,1)$ and $\varepsilon>0$ by taking
$$\Upsilon_{\varepsilon}(\alpha)=q^{\varepsilon}w_{2}(q)(1+n\lvert\alpha-a/q\lvert)^{-1}$$ when $\alpha\in\grM(a,q)\subset\grM$ and $\Upsilon_{\varepsilon}(\alpha)=0$ otherwise. Using Lemma \ref{prop4} we find that
$$\int_{\grM\setminus \grN}\lvert h(\alpha)\rvert^{4}\lvert W(\alpha)\rvert^{4}d\alpha\ll P^{12}\int_{\grM\setminus \grN}\lvert W(\alpha)\rvert^{4}\Upsilon_{\varepsilon}(\alpha)^{4}d\alpha.$$ 
Observe that combining Lemma \ref{flo} with equation (\ref{Wint}) we obtain that the contribution of the arcs with $q>M/4$ or $q\leq M/4$ and $\lvert\beta\rvert>  (12q)^{-1}H^{1/3}n^{-1}$ is $O\big((HM)^{4}P^{6-\delta}).$ Let $I'$ be the contribution to $I$ of the arcs with $q\leq M/4$ and $\lvert\beta\rvert\leq (12q)^{-1}H^{1/3}n^{-1}$. Then  Lemma \ref{prop4} yields
\begin{equation*}I'\ll P^{12}(HM)^{2}(\log P)^{-2}\int_{\grM\setminus \grN}\lvert W(\alpha)\rvert^{2}\Upsilon_{\varepsilon}(\alpha)^{6}d\alpha,\end{equation*}
whence using Lemma \ref{flo} and (\ref{Wint}) again we get that $I'=O\big(P^{6}(HM)^{4}(\log P)^{-4-2\tau/3+\varepsilon}\big).$
On the other hand, an application of Lemma \ref{lem06} gives the estimate
\begin{equation*}\int_{\grM\setminus \grN}\lvert h^{*}(\alpha)\rvert^{4}\lvert W^{*}(\alpha)\rvert^{4}\ll P^{6}(HM)^{4}(\log P)^{-4-2\tau+\varepsilon},
\end{equation*}which concludes the proof.
\end{proof}

\section{Singular integral and Proof of Theorem 1.1}\label{sec6}

We briefly introduce the singular integral, give upper and lower bounds for it and discuss the size of the exceptional sets of the integers $n$ with large minor arc contribution and for which the $n$-th Fourier coefficient of $\mathcal{F}(\alpha)$ over $\grM\setminus\grN$ is large as well.
Consider \begin{equation}\label{Jn}J(n)=\sum_{\mathbf{p}}\int_{\mathbf{Y}}J_{\mathbf{Y},\mathbf{p}}(n)d\mathbf{Y},\end{equation} where we define the collection $J_{\mathbf{Y},\mathbf{p}}(n)$ of singular integrals by \begin{equation*}J_{\mathbf{Y},\mathbf{p}}(n)=\int_{-\infty}^{\infty}V_{\mathbf{Y},\mathbf{p}}(\beta)e(-n\beta)d\beta\ \ \ \text{and}\ \ \ \ V_{\mathbf{Y},\mathbf{p}}(\beta)=\prod_{i=1}^{2}v_{\mathbf{y}_{i},p_{i}}(\beta)\prod_{i=3}^{4}v_{\mathbf{y}_{i}}(\beta).\end{equation*}
The range of integration taken above is the set of tuples $\mathbf{Y}=(\mathbf{y}_{1},\ldots,\mathbf{y}_{4})$ with $\mathbf{y}_{1},\mathbf{y}_{2}\in\mathcal{A}(H_{3},P^{\eta})^{2}$ and $\mathbf{y}_{3},\mathbf{y}_{4}\in\mathcal{A}(P,P^{\eta})^{2}$. Likewise, $\mathbf{p}$ runs over pairs of primes $(p_{1},p_{2})$ with $M/2\leq p_{i}\leq M.$  Here the reader might find useful to recall (\ref{vbeta}) and to observe that Lemma \ref{lema4} guarantees the absolute convergence of the above integrals.
\begin{lem}\label{JN}
One has that
$$J(n)\asymp (HM)^{2}(\log N)^{-2}.$$
\end{lem}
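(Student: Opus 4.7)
The plan is to establish the matching upper and lower bounds for $J(n)$ separately.

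For the upper bound, I would swap the order of summation and integration in (\ref{Jn}) (justified by absolute convergence) to rewrite
$$J(n)=\int_{-\infty}^{\infty}\mathcal{S}(\beta)^{2}\mathcal{T}(\beta)^{2}e(-n\beta)\,d\beta,$$
where $\mathcal{S}(\beta)=\sum_{M/2\le p\le M}\sum_{\mathbf{y}\in\mathcal{A}(H_{3},P^{\eta})^{2}}v_{\mathbf{y},p}(\beta)$ and $\mathcal{T}(\beta)=\sum_{\mathbf{y}\in\mathcal{A}(P,P^{\eta})^{2}}v_{\mathbf{y}}(\beta)$. The pointwise bounds $v_{\mathbf{y},p}(\beta)\ll H^{1/3}(1+n|\beta|)^{-1}$ and $v_{\mathbf{y}}(\beta)\ll P(1+n|\beta|)^{-1}$ established in the proof of Lemma~\ref{lema4}, combined with the cardinality estimate (\ref{smo}) applied for both $Y=P$ and $Y=H_{3}$, yield $|\mathcal{S}(\beta)|\ll HM(\log P)^{-1}(1+n|\beta|)^{-1}$ and $|\mathcal{T}(\beta)|\ll P^{3}(1+n|\beta|)^{-1}$. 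Since $\int_{-\infty}^{\infty}(1+n|\beta|)^{-4}d\beta\ll n^{-1}$, the triangle inequality delivers $|J(n)|\ll (HM)^{2}P^{6}n^{-1}(\log P)^{-2}\ll (HM)^{2}(\log N)^{-2}$, using $n\asymp P^{6}$.

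For the matching lower bound the key point is that $J_{\mathbf{Y},\mathbf{p}}(n)\ge 0$. Indeed, each factor $v_{\mathbf{y},p}(\beta)$ (resp.\ $v_{\mathbf{y}}(\beta)$) is the Fourier transform of the non-negative Jacobian $B_{\mathbf{y},p}$ (resp.\ $B_{\mathbf{y}}$) supported on an interval of length $\asymp P^{6}$; Fourier inversion at $n$ therefore produces
$$J_{\mathbf{Y},\mathbf{p}}(n)=\int_{\gamma_{1}+\gamma_{2}+\gamma_{3}+\gamma_{4}=n}\prod_{i=1}^{2}B_{\mathbf{y}_{i},p_{i}}(\gamma_{i})\prod_{i=3}^{4}B_{\mathbf{y}_{i}}(\gamma_{i})\,d\sigma,$$
the integral taken over the three-dimensional slice of the box $\prod_{i}[M_{i},N_{i}]$ by the hyperplane $\sum_{i}\gamma_{i}=n$. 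A routine inspection of the $B$-functions over the relevant ranges gives the uniform two-sided estimates $B_{\mathbf{y},p}(\gamma)\asymp M^{-6}H^{-5/3}$ and $B_{\mathbf{y}}(\gamma)\asymp P^{-5}$; combined with $M^{3}H=P^{3}$ the four-fold product is $\asymp H^{2/3}P^{-22}$, and hence $J_{\mathbf{Y},\mathbf{p}}(n)\gg H^{2/3}P^{-22}\cdot\mathrm{Vol}_{3}(\text{slice})$.

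To conclude, I would restrict attention to the subset of tuples $(\mathbf{Y},\mathbf{p})$ with $p_{i}\in[M/2,5M/8]$, $y_{i,j}\le H_{3}/2$ for $i=1,2$, and $y_{i,j}\le P/4$ for $i=3,4$. A direct computation shows that for such tuples $\sum M_{i}\le (1/2-c)P^{6}$ and $\sum N_{i}\ge (1+c)P^{6}$ for some fixed $c>0$, placing $n\in[P^{6}/2,P^{6}]$ in the interior of $[\sum M_{i},\sum N_{i}]$ at distance $\gg P^{6}$ from either endpoint. The density of a sum of four independent uniform variables on intervals of length $\asymp P^{6}$ is $\asymp P^{-6}$ throughout this interior, so $\mathrm{Vol}_{3}(\text{slice})\gg P^{24}\cdot P^{-6}=P^{18}$ and therefore $J_{\mathbf{Y},\mathbf{p}}(n)\gg H^{2/3}P^{-4}$ for every tuple in the restricted set. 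Since (\ref{smo}) yields $\gg H^{4/3}P^{4}M^{2}(\log P)^{-2}$ admissible tuples, summing produces $J(n)\gg (HM)^{2}(\log N)^{-2}$ as required. The main obstacle is the geometric verification that the hyperplane slice has three-dimensional volume $\gg P^{18}$ for a positive fraction of tuples, given the non-trivial dependence of $M_{i},N_{i}$ on $C_{\mathbf{y}_{i}}$ and $p_{i}$; the outlined scheme reduces this to explicit numerical inequalities carving out a parameter subregion of positive density in the smooth-number counting measure.
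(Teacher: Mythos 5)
Your argument is correct, and it reaches the two pointwise facts the paper needs --- $0\le J_{\mathbf{Y},\mathbf{p}}(n)\ll P^{-4}H^{2/3}$ in general and $J_{\mathbf{Y},\mathbf{p}}(n)\gg P^{-4}H^{2/3}$ on a positive proportion of tuples --- but it does so by a genuinely different route: the paper simply cites Lemma 5.1 of \cite{Pli1} (checking that its hypotheses transfer with $k=s=t=2$ and restricted ranges $M/2\le p_i\le 51M/100$, $\mathbf{y}_i\le P/2$) and then counts tuples, whereas you re-derive those estimates from scratch. Your upper bound via the decay estimates of Lemma \ref{lema4} and $\int_{-\infty}^{\infty}(1+n\lvert\beta\rvert)^{-4}d\beta\ll n^{-1}$ is exactly the computation the citation encapsulates, and your lower bound via Fourier inversion, writing $J_{\mathbf{Y},\mathbf{p}}(n)$ as the convolution at $n$ of the non-negative densities $B_{\mathbf{y}_i,p_i}$, $B_{\mathbf{y}_i}$, with the uniform two-sided bounds $B_{\mathbf{y}}\asymp P^{-5}$ and $B_{\mathbf{y},p}\asymp M^{-1}P^{-5}=M^{-6}H^{-5/3}$ on supports of length $\asymp P^{6}$, is a self-contained substitute for the companion paper's argument; both proofs then conclude identically by positivity plus the positive-density count $\gg H^{4/3}P^{4}M^{2}(\log P)^{-2}$ coming from (\ref{smo}) (for the primes you should invoke Chebyshev rather than (\ref{smo}), a trivial point). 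The ``main obstacle'' you flag does go through: with your restrictions one has $\sum_i M_i\le \tfrac{1}{10}P^{6}$ and $\sum_i N_i\ge 2P^{6}$ while $n\in[P^{6}/2,(1+o(1))P^{6}]$, so $n$ sits at distance $\gg P^{6}$ from both endpoints, and since the convolution of uniform densities is symmetric, unimodal (indeed log-concave) and $\asymp d^{3}/\prod_i L_i$ at distance $d\le\min_i L_i$ from an endpoint, the slice volume is $\gg P^{18}$ there; so your scheme is complete. What the paper's citation buys is brevity and uniformity with \cite{Pli1}; what your version buys is a proof readable without access to the unpublished companion paper.
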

\begin{proof}
An inspection of the proof of Lemma 5.1 of \cite{Pli1} reveals that the positivity and the upper bound for $J_{\mathbf{Y},\mathbf{p}}(n)$ deduced there remain valid subject only to the condition $s+t\geq 2$. Consequently, on making the choices $k=s=t=2$ here we obtain $0\leq J_{\mathbf{Y},\mathbf{p}}(n)\ll P^{-4}H^{2/3}$, whence applying this estimate trivially to (\ref{Jn}) gives the required upper bound. Likewise, when $M/2\leq p_{i}\leq 51 M/100$ for $i\leq t$ and $\mathbf{y}_{i}\leq P/2$ for $t+1\leq i\leq s+t$ then the lower bound obtained in that lemma also holds as long as $\big(\big(3/8\big)^{k}s+\big(1/8\big)^{k}t\big)P^{3k}<n$. Therefore, on considering such range here we get that $J_{\mathbf{Y},\mathbf{p}}(n)\gg P^{-4}H^{2/3}$. Observe that the set of tuples on that range has positive density over the set without the restrictions. Consequently, the preceeding remark and the positivity of $J_{\mathbf{Y},\mathbf{p}}(n)$ deliver the lower bound stated at the beginning.
\end{proof}
We remind the reader of the definition (\ref{Vup}). Note that then the combination of Lemma \ref{lema4} and equation (\ref{tup}) with a change of variables yields
\begin{equation}\label{hWW}\int_{\grM}h^{*}(\alpha)^{2}W^{*}(\alpha)^{2}e(-\alpha n)d\alpha=\frak{S}(n)J(n)+O\big((HM)^{2}N^{-\delta}\big).\end{equation}
For the rest of the section we introduce the exceptional sets which arise in both the major and the minor arc analysis and we give bounds for the cardinality of them. Let $\delta>0$ and let $\mathcal{E}_{\delta}(N)$ denote the set of integers $N/2\leq n\leq N$ such that 
$$\int_{\grm} h(\alpha)^{2}W(\alpha)^{2}e(-n\alpha)d\alpha\gg (HM)^{2}P^{-\delta/2}.$$ Likewise, define $\mathcal{E}(N)$ to be the set of integers $N/2\leq n\leq N$ for which
\begin{equation}\label{exc}\int_{\grM\setminus\grN}\mathcal{F}(\alpha)e(-n\alpha)d\alpha\gg (HM)^{2}(\log P)^{-2-2\tau/9}.\end{equation}
\begin{prop}\label{prop1}
With the above notation, one has that $$\lvert \mathcal{E}(N)\rvert\ll N(\log N)^{-2\tau/9+\varepsilon},$$ and there exists some $\delta>0$ for which $\lvert \mathcal{E}_{\delta}(N)\rvert\ll N^{1-\delta}.$
\end{prop}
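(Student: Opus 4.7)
The plan is to obtain both estimates via Bessel's inequality, treating the integrals in the defining conditions of $\mathcal{E}(N)$ and $\mathcal{E}_{\delta}(N)$ as $n$-th Fourier coefficients of suitably truncated functions on $[0,1)$. For the first bound, the required mean square has already been supplied by Proposition \ref{lem7}; for the second, I would invoke the minor arc $L^{2}$-estimate provided by \cite{Pli1}.

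For the bound on $\lvert \mathcal{E}(N) \rvert$, I observe that the integral on the left side of (\ref{exc}) is precisely the $n$-th Fourier coefficient of the truncated function $\mathcal{F}(\alpha) \mathbf{1}_{\grM \setminus \grN}(\alpha)$, so Bessel's inequality yields
\begin{equation*}
\sum_{n \in \mathbb{Z}} \left\lvert \int_{\grM \setminus \grN} \mathcal{F}(\alpha) e(-\alpha n) d\alpha \right\rvert^{2} \leq \int_{\grM \setminus \grN} \lvert \mathcal{F}(\alpha) \rvert^{2} d\alpha.
\end{equation*}
Each $n \in \mathcal{E}(N)$ contributes at least $(HM)^{4} (\log P)^{-4 - 4\tau/9}$ to the left-hand side. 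Combining this with Proposition \ref{lem7} and rearranging gives $\lvert \mathcal{E}(N) \rvert \ll P^{6} (\log P)^{-2\tau/9 + \varepsilon}$. Recalling that $P^{6} \asymp N$ and $\log P \asymp \log N$ then delivers the first assertion.

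For the bound on $\lvert \mathcal{E}_{\delta}(N) \rvert$, the same Bessel manoeuvre applied to the truncation $h(\alpha)^{2} W(\alpha)^{2} \mathbf{1}_{\grm}(\alpha)$ yields
\begin{equation*}
\lvert \mathcal{E}_{\delta}(N) \rvert \, (HM)^{4} P^{-\delta} \ll \int_{\grm} \lvert h(\alpha) \rvert^{4} \lvert W(\alpha) \rvert^{4} d\alpha.
\end{equation*}
At this point I would invoke the minor arc mean value from \cite{Pli1}, which, suitably arranged, takes the shape $\int_{\grm} \lvert h \rvert^{4} \lvert W \rvert^{4} d\alpha \ll (HM)^{4} P^{6 - \delta_{0}}$ for some fixed $\delta_{0} > 0$, saving a power of $P$ over the trivial Parseval bound. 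Substituting this estimate and choosing $0 < \delta < \delta_{0}/7$ produces $\lvert \mathcal{E}_{\delta}(N) \rvert \ll P^{6 + \delta - \delta_{0}} \ll N^{1 - \delta}$, as required.

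The only genuine difficulty lies in the minor arc input from \cite{Pli1}, which is the deep technical ingredient of the paper and rests on near-optimal fourth-moment bounds for $h(\alpha)$ over $\grm$ obtained there via smooth number methods and the structure of the weights $a_{x}$. By contrast, the argument at this stage is entirely soft: nothing beyond Bessel's inequality together with Proposition \ref{lem7} and the cited minor arc bound is needed, and no additional analysis of the singular series, the singular integral, or the auxiliary functions $h^{*}, W^{*}$ enters into the proof of Proposition \ref{prop1}.
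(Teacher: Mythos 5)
Your proposal is correct and follows essentially the same route as the paper: both bounds are obtained by Bessel's inequality applied to the Fourier coefficients of the truncated functions, using Proposition \ref{lem7} for $\mathcal{E}(N)$ and the minor arc fourth-moment estimate (Proposition 1 of \cite{Pli1}) for $\mathcal{E}_{\delta}(N)$, with the same bookkeeping in the exponents.
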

\begin{proof}
We obtain these two bounds via a routine application of Bessel's inequality. For such matters, observe first that Proposition 1 of \cite{Pli1} gives the estimate $$\int_{\grm}\lvert h(\alpha)W(\alpha)\rvert^{4}d\alpha\ll (HM)^{4}P^{6-2\delta},$$ for some $\delta>0$. Define the Fourier coefficient $c(n)$ of the product of the generating functions on the minor arcs by $$c(n)=\int_{\grm} h(\alpha)^{2}W(\alpha)^{2}e(-n\alpha)d\alpha.$$ Note that Bessel's inequality yields
$$\sum_{N/2\leq n\leq N}\lvert c(n)\rvert^{2}\ll \int_{\grm}\lvert h(\alpha)W(\alpha)\rvert^{4}d\alpha\ll (HM)^{4}P^{6-2\delta},$$whence the bound on $\lvert\mathcal{E}_{\delta}(N)\rvert$ follows from last equation.
Similarly, we introduce the Fourier coefficient $$a(n)=\int_{\grM\setminus\grN}\mathcal{F}(\alpha)e(-n\alpha)d\alpha.$$ Then by Proposition \ref{lem7} and Bessel's inequality we get
$$\sum_{N/2\leq n\leq N}\lvert a(n)\rvert^{2}\ll \int_{\grM\setminus\grN}\lvert \mathcal{F}(\alpha)\rvert^{2}d\alpha\ll P^{6}(HM)^{4}(\log P)^{-4-2\tau/3+\varepsilon},$$
which yields the bound for $\lvert\mathcal{E}(N)\rvert$ stated at the beginning of the proposition.
\end{proof}
\emph{Proof of Theorem \ref{thm01}}. Take $n\in\mathbb{N}$ with $N/2\leq n\leq N$ and $n\notin  \mathcal{E}_{\delta}(N)\cup \mathcal{E}(N).$ Recalling the definitions before (\ref{exc}) and combining Propositions \ref{prop11} and (\ref{hWW}) we obtain
\begin{align*}
R(n)&
=\int_{\grm}h(\alpha)^{2}W(\alpha)^{2}e(-\alpha n)d\alpha+\int_{\grM}h^{*}(\alpha)^{2}W^{*}(\alpha)^{2}e(-\alpha n)d\alpha+\int_{\grM}\mathcal{F}(\alpha)e(-\alpha n)d\alpha
\\
&=\frak{S}(n)J(n)+O\big((HM)^{2}(\log N)^{-2-2\tau/9+\varepsilon}\big).
\end{align*}
Now fix a parameter $\upsilon<2\tau/9$. Then applying Lemmata \ref{lema3} and \ref{JN} we find that whenever $n$ is as described above with the additional condition $n\notin A_{\upsilon}(N)$ then one has
$$R(n)\gg (HM)^{2}(\log N)^{-2-\upsilon}.$$ Observe that by Lemma \ref{lema3} and Proposition \ref{prop1} the cardinality of the set of integers $N/2\leq n\leq N$ with $n\notin  \mathcal{E}_{\delta}(N)\cup \mathcal{E}(N)\cup A_{\upsilon}(N)$ is $O\big(N(\log N)^{-\upsilon}\big).$ Consequently, summing over dyadic intervals and observing that we can take $\upsilon$ to be as close to $2\tau/9$ as possible we obtain the desired result.


\begin{thebibliography}{9}

\bibitem{Hea} D.R. Heath-Brown, \emph{The circle method and diagonal cubic forms}, Phil. Trans. Roy. Soc. London Ser. A 356 (1998), 673--699.
\bibitem{Hol1} C. Hooley, \emph{On Waring's problem}, Acta Math., 157 (1986), 49--97.
\bibitem{Hol2} C. Hooley, \emph{On Hypothesis $K^{*}$ in Waring's problem}, Sieve methods, exponential sums and their applications in number theory (Cardiff, 1995), London Math. Soc. Lecture Note Ser., 237, Cambridge University Press, Cambridge 1997,  175--185.
\bibitem{MonVa} Hugh L. Montgomery, R. C. Vaughan, \emph{The exceptional set in Goldbach's problem,} Acta Arith. 27 (1975), 353--370.
\bibitem{Mon} Hugh L. Montgomery, R. C. Vaughan, \emph{Multiplicative Number Theory: I. Classical Theory}, Cambridge University Press, 2006.
\bibitem{Pli} J. Pliego, \emph{On Waring's problem in sums of three cubes}, preprint.
\bibitem{Pli1} J. Pliego, \emph{On Waring's problem in sums of three cubes for smaller powers}, preprint.
\bibitem{Vau} R. C. Vaughan, \emph{The Hardy-Littlewood method}, 2nd edition, Cambridge University Press, Cambridge, 1997.
\bibitem{Woo7} T. D. Wooley, \emph{Slim exceptional sets for sums of four squares}, Proc. London Math. Soc. (3) 85 (2002), 1--21.
\bibitem{Woo8} T. D. Wooley, \emph{Slim Exceptional Sets for Sums of Cubes}, Canad. J. Math. Vol. 54 (2), (2002), 417--448.
\bibitem{Woo3} T. D. Wooley, \emph{Sums of three cubes, II}, Acta Arith. 170, No. 1 (2015), 73--100.

\end{thebibliography}
\end{document}